\tikzstyle{subgroup}=[scale=1]
\newtheorem{prevtheorem}{Theorem}
\newtheorem{proposition}{Proposition}
\newtheorem{corollary}{Corollary}
\newtheorem{lemma}{Lemma}
\def\CD#1{\mathcal{CD}#1}
\title{\bf Central Products and the Chermak--Delgado Lattice}
\author{William Cocke}
\address{School of Computer and Cyber Sciences, Augusta University, Augusta, GA 30901}
\email{wcocke@augusta.edu}
\author{Ryan McCulloch}
\address{Division of Mathematics \& Natural Sciences, Elmira College, Elmira, NY 14901}
\email{rmcculloch@elmira.edu}
\date{\today}
\begin{document}

\begin{abstract}
The Chermak--Delgado lattice of a finite group is a modular, self-dual sublattice of the lattice of subgroups. We prove that the Chermak--Delgado lattice of a central product contains the product of the Chermak--Delgado lattices of the relevant central factors.  Furthermore, we obtain information about heights of elements in the Chermak--Delgado lattice relative to their heights in the Chermak--Delgado lattices of central factors.  We also explore how the central product can be used as a tool in investigating Chermak--Delgado lattices.  
\end{abstract}

\keywords{finite group theory, group theory, Chermak-Delgado lattice, subgroup lattice, central product}

\subjclass[2020]{Primary 20E15, 20D40}

\maketitle

\section{Introduction.}
The Chermak--Delgado lattice consists of subgroups of a finite group that have maximal Chermak--Delgado measure. Due to the many unique properties of the Chermak--Delgado lattice, it has attracted attention from researchers interested in lattice theory, general finite group theory, and centralizers of groups. 

Originally defined by Chermak and Delgado \cite{Che_89}, the Chermak--Delgado lattice for a finite group $G$ is defined using the so-called Chermak--Delgado measure $m$ which takes subgroups of $G$ to positive integers via the formula \[m_G(H) = |H| \cdot |\textbf{C}_G(H)|.\] It is quite interesting, and perhaps counter-intuitive that the subgroups with maximal value of $m$ in a group form a sublattice of the subgroup lattice of $G$. Recall that the subgroup lattice of $G$ is the poset of subgroups of $G$ with the operations of meet and join defined by subgroup intersection and subgroup generated by respectively, i.e., $H \wedge K = H \cap K$ and $H\vee K = \langle H, K\rangle$. Surprisingly, when $H$ and $K$ are in the Chermak--Delgado lattice, we have that $\langle H , K \rangle = HK$, i.e., the set theoretic product is actually a subgroup. Furthermore if $H$ is in the Chermak--Delgado lattice then $\textbf{C}_G(H)$ is in the Chermak--Delgado lattice. In addition, we know that for all $H$ in the Chermak--Delgado lattice of $G$, the subgroup $H$ must contain $\textbf{Z}(G)$, the center of $G$. The many properties we presented in this paragraph can be found in Isaacs \cite[Section 1.G]{FGT}. Because of these properties as well as current research about the Chermak--Delgado lattice for various groups, questions about the Chermak--Delgado lattice make for good research projects that can be accessible to students \cite{Wil_16, Bru_16}. 

We write $m^*(G)$ for the maximum value that $m$ takes on a finite group $G$, and $\CD(G)$ for the Chermak--Delgado lattice of $G$. Hence for a subgroup $H$ of $G$ we have that $H\in \CD(G)$ if and only if $m_G(H) = m^*(G)$. 

Recent work on the Chermak--Delgado lattice can be broadly classified as coming in two broad themes. While not exhaustive, we list some references below. \begin{enumerate} \item Showing that certain types of structures occur as Chermak--Delgado lattices for various group: chains \cite{Bre_14}, antichains \cite{Bre_14_2}, diamonds \cite{An_15}, all subgroups containing $\textbf{Z}(G)$ in a group \cite{Tar_18}; or how the Chermak--Delgado lattice relates to certain families of groups \cite{Gla_06}, \cite{Zuc_18}, \cite{An_22_2}, \cite{Mcc_18}.
\item Showing properties of the Chermak--Delgado lattice in general: the Chermak--Delgado lattice of a direct product is the direct product of the Chermak--Delgado lattices of the factors \cite{Bre_12}. All subgroups of the Chermak--Delgado lattice of a finite group are subnormal in that group with subnormal depth bounded by their relevant position within the lattice \cite{Bre_12, Coc_20}. Most recently, the question was asked about how many groups are not contained in the Chermak--Delgado lattice \cite{Fas_22}. 
\end{enumerate}

Related to the direct product is the central product. Recall that a group $G$ is a central product of two of its subgroups $A$ and $B$ if $G=AB$ and $[A,B]=1,$ i.e., $ab=ba$ for all $a\in A$ and $b\in B$.  

We prove that the Chermak--Delgado lattice of a central product contains the product of the Chermak--Delgado lattices of the relevant central factors. 
\begin{prevtheorem}\label{thm:central_product}
Let $G$ be a finite group and $A,B \leq G$ such that $G$ is a central product of $A$ and $B$. Then $\CD(A)\cdot \CD(B) \subseteq \CD(G)$. Furthermore, the top (resp.\ bottom) of $\CD(G)$ is equal to the product of the tops (resp.\ bottoms) of $\CD(A)$ and $\CD(B)$. 
\end{prevtheorem}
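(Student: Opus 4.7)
Set $Z := A \cap B$. Since $[A,B]=1$, both $A$ and $B$ are normal in $G$ and $Z \leq \mathbf{Z}(G)$. My plan is to bound $m_G(L)$ uniformly for every $L \leq G$, observe that the bound is attained precisely by subgroups of the form $HK$ with $H \in \CD(A)$ and $K \in \CD(B)$, and then read the lattice-theoretic consequences from the equality analysis. The key auxiliary subgroups are
\[
L_A := LB \cap A,\qquad L_B := LA \cap B,\qquad \ell_A := L \cap A,
\]
which collect the ``$A$-part'' and ``$B$-part'' of $L$ under the (non-unique) decomposition $l = a_l b_l$ with $a_l \in A$, $b_l \in B$. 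Both $L_A$ and $L_B$ contain $Z$, intersect in $Z$, and satisfy $L \leq L_A L_B$.

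The technical heart, which I expect to be the main obstacle, is the centralizer identity
\[
\mathbf{C}_G(L) \;=\; \mathbf{C}_A(L_A) \cdot \mathbf{C}_B(L_B).
\]
I will prove it by writing $g = ab$ and $l = a_l b_l$ and using $[A,B]=1$ to reduce $gl=lg$ to the pair of conditions $[a,a_l]=[b,b_l]=1$; the ambiguity in the decomposition is by an element of $Z \leq \mathbf{Z}(G)$ and is therefore harmless, and as $l$ ranges over $L$ the $A$-parts $a_l$ sweep out exactly $L_A$. Combined with the easy intersection formula $\mathbf{C}_A(L_A) \cap \mathbf{C}_B(L_B) = Z$ and the short exact sequence $1 \to \ell_A \to L \to L_B/Z \to 1$ obtained by projecting $L$ onto $G/A \cong B/Z$, this gives
\[
m_G(L) \;=\; \frac{|\ell_A|}{|L_A|} \cdot \frac{m_A(L_A)\, m_B(L_B)}{|Z|^2} \;\leq\; \frac{m^*(A)\, m^*(B)}{|Z|^2},
\]
with equality iff $\ell_A = L_A$, $L_A \in \CD(A)$, and $L_B \in \CD(B)$.

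For the inclusion $\CD(A) \cdot \CD(B) \subseteq \CD(G)$, take $L = HK$ with $H \in \CD(A)$, $K \in \CD(B)$; since every member of a Chermak--Delgado lattice contains the center of the ambient group, $H, K \geq Z$, and the modular law yields $(HK)_A = H(B \cap A) = H$, $(HK)_B = K$, and $HK \cap A = H$. All three slack factors equal $1$, so $HK$ attains the bound: $HK \in \CD(G)$ and $m^*(G) = m^*(A)m^*(B)/|Z|^2$.

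Finally, for the top/bottom statement, the equality analysis read in reverse forces every $L \in \CD(G)$ to satisfy $L_A \in \CD(A)$, $L_B \in \CD(B)$, and $\ell_A = L_A$, whence $L \supseteq L_A$; the symmetric argument gives $L \supseteq L_B$, so combined with $L \leq L_A L_B$ we conclude $L = L_A L_B$. Letting $T_A, B_A$ (resp.\ $T_B, B_B$) denote the top and bottom of $\CD(A)$ (resp.\ $\CD(B)$), every $L \in \CD(G)$ therefore satisfies $B_A B_B \leq L \leq T_A T_B$, and both endpoints themselves lie in $\CD(G)$ by the previous paragraph; they are consequently the bottom and top of $\CD(G)$.
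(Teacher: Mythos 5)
Your strategy hinges on the identity $\textbf{C}_G(L)=\textbf{C}_A(L_A)\cdot\textbf{C}_B(L_B)$ for an \emph{arbitrary} $L\leq G$, and this identity is false: only the containment $\textbf{C}_A(L_A)\textbf{C}_B(L_B)\subseteq\textbf{C}_G(L)$ holds in general. Writing $g=ab$ and $l=a_lb_l$, the condition $gl=lg$ does not reduce to $[a,a_l]=[b,b_l]=1$; it only forces the $A$-commutator and the $B$-commutator to be \emph{equal} as elements of $Z=A\cap B$, and ``diagonal'' subgroups exploit exactly this slack, so the ambiguity is not harmless. Concretely, let $G=Q_8*Q_8$ (central product over $Z=\{\pm1\}$, with generators $i_1,j_1$ and $i_2,j_2$) and $L=\langle -1,\ i_1i_2,\ j_1j_2\rangle$, an abelian subgroup of order $8$. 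Then $L_A=A$ and $L_B=B$, so your identity predicts $\textbf{C}_G(L)=\textbf{Z}(A)\textbf{Z}(B)=Z$ of order $2$ and your displayed formula gives $m_G(L)=\frac{|\ell_A|}{|L_A|}\cdot\frac{m^*(A)m^*(B)}{|Z|^2}=\frac{2}{8}\cdot 64=16$; in fact $\textbf{C}_G(L)=L$ and $m_G(L)=64=m^*(G)$, so $L\in\CD(G)$. Since the product of centralizers bounds $|\textbf{C}_G(L)|$ from \emph{below}, your expression is a lower bound for $m_G(L)$ rather than an upper bound, and the uniform inequality $m_G(L)\leq m^*(A)m^*(B)/|Z|^2$ is not established by this route (it is true, but only as a consequence of the theorem, not an input to it).

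The same example shows the final paragraph proves too much: your equality analysis would give $\CD(G)=\CD(A)\cdot\CD(B)$ for every central product, but the diagonal $L$ above lies in $\CD(G)$ and is not of the form $HK$ with $Z\leq H\leq A$ and $Z\leq K\leq B$ (by the uniqueness in Lemma \ref{lem: prod}, such a subgroup with $L_A=A$ and $L_B=B$ would have to be all of $G$); the paper's Proposition \ref{prop: bad_antichain} likewise records that the containment can be proper. Your computation is sound precisely when $L=HK$ is itself a product subgroup containing $Z$ in each factor --- that is the content of the paper's Proposition \ref{prop: almost} --- but that conditional statement still requires exhibiting \emph{some} product subgroup of maximal measure in $G$. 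The paper does this not by bounding $m_G$ over all subgroups but by combining Isaacs's inequality (Lemma \ref{lem: iss}) with generalizations of An's lemmas (Lemmas \ref{lem: an1} and \ref{lem: an2}) to show $T_AT_G\in\CD(G)$ and ultimately $T_AT_B=T_G$; some argument of that kind is the missing ingredient your proposal would need.
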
 

Hence we have the following corollary. 

\begin{corollary}
If $G$ is a central product of $A$ and $B$ then $m^*(G) = \dfrac{m^*(A) \cdot m^*(B)}{|A\cap B|^2}.$
\end{corollary}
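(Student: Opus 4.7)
The plan is to exploit Theorem A, which pins down the top of $\CD(G)$ exactly. Let $M$ denote the top of $\CD(A)$ and $N$ the top of $\CD(B)$; then $m_A(M)=m^*(A)$, $m_B(N)=m^*(B)$, and by Theorem A the subgroup $MN$ is the top of $\CD(G)$, so $m^*(G)=|MN|\cdot|\mathbf{C}_G(MN)|$. It therefore suffices to evaluate these two factors in terms of quantities internal to $A$ and $B$.

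For $|MN|$, note that since $[A,B]=1$ we have $A\cap B\leq \mathbf{Z}(A)\cap\mathbf{Z}(B)$, and since every subgroup in a Chermak--Delgado lattice contains the relevant center, $A\cap B\leq M\cap N$; the reverse inclusion is automatic from $M\leq A$ and $N\leq B$, so $M\cap N=A\cap B$ and hence $|MN|=|M||N|/|A\cap B|$. For $|\mathbf{C}_G(MN)|$, I would write an arbitrary $g\in G=AB$ as $g=ab$ with $a\in A$, $b\in B$; using that $b$ centralizes $M$ and $a$ centralizes $N$, the condition $g\in\mathbf{C}_G(MN)$ reduces to $a\in\mathbf{C}_A(M)$ and $b\in\mathbf{C}_B(N)$, so $\mathbf{C}_G(MN)=\mathbf{C}_A(M)\mathbf{C}_B(N)$. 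The same centrality argument as above gives $\mathbf{C}_A(M)\cap\mathbf{C}_B(N)=A\cap B$, so $|\mathbf{C}_G(MN)|=|\mathbf{C}_A(M)||\mathbf{C}_B(N)|/|A\cap B|$. Multiplying these identities yields
\[
m^*(G)=\frac{|M||\mathbf{C}_A(M)|\cdot|N||\mathbf{C}_B(N)|}{|A\cap B|^2}=\frac{m_A(M)\,m_B(N)}{|A\cap B|^2}=\frac{m^*(A)\,m^*(B)}{|A\cap B|^2}.
\]

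There is no substantive obstacle beyond Theorem A itself: once the top of $\CD(G)$ is known to be $MN$, the corollary is a clean product formula, and the factor $|A\cap B|^2$ appears naturally because this intersection is absorbed once when combining the subgroups $M$ and $N$, and a second time when combining the centralizers $\mathbf{C}_A(M)$ and $\mathbf{C}_B(N)$.
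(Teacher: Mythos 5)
Your proof is correct and follows essentially the same route as the paper: the paper deduces this corollary from Theorem~\ref{thm:central_product} via Proposition~\ref{prop: almost}, whose key computation is exactly your identity $m_G(XY)=\frac{|X||Y|}{|A\cap B|}\cdot\frac{|\mathbf{C}_A(X)||\mathbf{C}_B(Y)|}{|A\cap B|}$ applied to members of $\CD(A)\cdot\CD(B)$. The only cosmetic difference is that you specialize to the tops $M$, $N$, whereas the paper runs the computation for arbitrary $X\in\CD(A)$, $Y\in\CD(B)$ once some $HK$ is known to lie in $\CD(G)$.
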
 

In addition, we have the following structural information about the Chermak--Delgado lattices of central products regarding the heights and depths of elements in the product. 

Recall that the \emph{depth of} $H\in \CD(G)$ is the length of a maximal chain 
\[H= G_0 < G_1 < \dots < G_n = T_G\] of elements of $\CD(G)$ where $T_G$ is the top element; the \emph{height of} $H\in \CD(G)$ is the length of a maximal chain 
\[B_G = G_0 < G_1 < \dots < G_n = H\] of elements of $\CD(G)$ where $B_G$ is the bottom element.  The \emph{height of} $\CD(G)$ is the height of the top element (or, equivalently, the depth of the bottom element).  These quantities are well-defined since $\CD(G)$ is a modular lattice, and so all of the maximal chains between two fixed elements are of the same length.

\begin{prevtheorem} \label{thm: levels}
If $G$ is a central product of $A$ and $B$, then the height of the Chermak--Delgado lattice of $G$ is equal to the sum of the heights of the Cheramk--Delgado lattices of $A$ and $B$ respectively. Moreover, an element $HK\in \CD(G)$ with $A \cap B \leq H \leq A$ and $A \cap B \leq K \leq B$ has height (resp.\ depth) equal to the sum of the heights (resp.\ depths) of $H\in \CD(A)$ and $K\in \CD(B)$. 
\end{prevtheorem}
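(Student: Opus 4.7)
My plan is to prove the ``moreover'' assertion first; the height of $\CD(G)$ then drops out by specializing to $HK = T_AT_B$. First, any $HK \in \CD(G)$ with $A \cap B \leq H \leq A$ and $A \cap B \leq K \leq B$ automatically satisfies $H \in \CD(A)$ and $K \in \CD(B)$: the central-product centralizer identity $C_G(HK) = C_A(H)C_B(K)$ (as in the proof of Theorem~A) gives $m_G(HK) = m_A(H)m_B(K)/|A \cap B|^2$, and combining with the Corollary $m^*(G) = m^*(A)m^*(B)/|A \cap B|^2$ together with $m_A(H) \leq m^*(A)$ and $m_B(K) \leq m^*(B)$ forces equality. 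Now fix maximal chains $B_A = H_0 \lessdot \cdots \lessdot H_r = H$ in $\CD(A)$ and $B_B = K_0 \lessdot \cdots \lessdot K_s = K$ in $\CD(B)$, with $r = h_A(H)$ and $s = h_B(K)$. Theorem~A places every $H_iK_j$ in $\CD(G)$, so concatenating gives
\[B_AB_B = H_0K_0 < H_1K_0 < \cdots < HK_0 < HK_1 < \cdots < HK_s = HK,\]
and the computation $(H_iK_j) \cap A = H_i(K_j \cap A) = H_i(A \cap B) = H_i$ (Dedekind, since $K_j \cap A \leq A \cap B \leq H_i$) together with its $B$-side analogue confirms each containment is strict.

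The crux is to show each link is a cover in $\CD(G)$. By symmetry, assume $M \in \CD(G)$ satisfies $H_iK_j < M < H_{i+1}K_j$. Dedekind applied to $M \leq H_{i+1}K_j$ gives $M \cap B \leq (H_{i+1}K_j) \cap B = K_j$, and $K_j \leq H_iK_j < M$ yields $M \cap B = K_j$. Writing any $m \in M$ as $hk$ with $h \in H_{i+1}$ and $k \in K_j$, we get $h = mk^{-1} \in M \cap A$, so $M = (M \cap A)K_j$ with $H_i \leq M \cap A \leq H_{i+1}$, both extremes strictly excluded by $H_iK_j < M < H_{i+1}K_j$. Applying the central-product measure formula $m_G((M \cap A)K_j) = m_A(M \cap A)m_B(K_j)/|A \cap B|^2$ and using $m_G(M) = m^*(G)$ and $m_B(K_j) = m^*(B)$, we deduce $m_A(M \cap A) = m^*(A)$; that is, $M \cap A \in \CD(A)$ strictly between $H_i$ and $H_{i+1}$. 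This contradicts the cover $H_i \lessdot H_{i+1}$ in $\CD(A)$.

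Thus the displayed chain is unrefinable, and by modularity of $\CD(G)$ and Jordan--H\"older, $h_G(HK) = r + s = h_A(H) + h_B(K)$. Specializing to $HK = T_AT_B$ recovers the height of $\CD(G)$. The depth statement follows from the self-duality $M \mapsto C_G(M)$ of $\CD(G)$: the relation $C_G(HK) = C_A(H)C_B(K)$ identifies the depth of $HK$ in $\CD(G)$ with the height of $C_A(H)C_B(K)$, which by the just-established height formula equals $h_A(C_A(H)) + h_B(C_B(K))$, i.e.\ the sum of the depths of $H$ in $\CD(A)$ and $K$ in $\CD(B)$. I expect the main obstacle to lie in the cover-preservation step: forcing the would-be refiner's intersection with $A$ into $\CD(A)$ relies on the central-product measure formula and the exact value of $m^*(G)$ from the Corollary, while the remaining manipulations are routine consequences of modularity and self-duality.
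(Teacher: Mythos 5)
Your proposal is correct and follows essentially the same route as the paper: both arguments come down to showing that $H_iK \lessdot H_{i+1}K$ is a cover in $\CD(G)$ by decomposing an intermediate $M$ as the product of its $A$-part with $K$ and using the central-product measure formula (Proposition \ref{prop: almost}) to force that $A$-part into $\CD(A)$ strictly between $H_i$ and $H_{i+1}$, a contradiction. The only cosmetic differences are that the paper organizes this as an induction on $i+j$ rather than an explicit unrefinable chain plus Jordan--H\"older, works with the projection $\pi_A(M)$ in place of $M \cap A$ (these coincide in the relevant situation), and obtains the depth statement by arithmetic with the total heights $n_A + n_B = n_G$ rather than by self-duality under $M \mapsto \textbf{C}_G(M)$.
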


In Section \ref{sec: corollaries} we show how the central product can be used to prove results about the Chermak--Delgado lattice. 

\section{Central Products and the Chermak--Delgado Lattice.}\label{sec: main}

A group $G$ is a central product of $A$ and $B$, which are subgroups of $G$, if $G=AB$ and $[A,B]=1$.  Note for every group $G$ we have the central product $G=G \textbf{Z}(G)$.  Also note that if $G=AB$ is a central product, then both $A$ and $B$ are normal subgroups of $G$ and that $A \cap B \subseteq \textbf{Z}(G)$.  Also for any $X_1 \leq A$ and for any $X_2 \leq B$, we have that $\textbf{C}_G(X_1X_2) = \textbf{C}_A(X_1)\textbf{C}_B(X_2)$. This last observation is almost enough to prove Theorem \ref{thm:central_product}.

\begin{proposition} \label{prop: almost}
Suppose a finite group $G$ is a central product of $A$ and $B$. If $HK \in \mathcal{C}\mathcal{D}(G)$ for some $H \leq A$ and some $K \leq B$, then $m^*(G) = \dfrac{m^*(A) \cdot m^*(B)}{|A \cap B|^2}$, $\mathcal{C}\mathcal{D}(A) \cdot \mathcal{C}\mathcal{D}(B) \subseteq \mathcal{C}\mathcal{D}(G)$, and $H(A \cap B) \in \mathcal{C}\mathcal{D}(A)$ and $K(A \cap B) \in \mathcal{C}\mathcal{D}(B)$.
\end{proposition}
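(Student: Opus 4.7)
The plan is to exploit the formula $\textbf{C}_G(X_1X_2) = \textbf{C}_A(X_1)\textbf{C}_B(X_2)$ noted just before the proposition, together with the fact that $A \cap B \leq \textbf{Z}(G)$, in order to reduce $m_G(HK)$ to an expression in $m_A(H)$ and $m_B(K)$.  The first move is to reduce to the case where both $H$ and $K$ contain $A \cap B$.  Setting $H' := H(A\cap B)$ and $K' := K(A\cap B)$, the centrality of $A\cap B$ gives $\textbf{C}_A(H') = \textbf{C}_A(H)$ and $\textbf{C}_B(K') = \textbf{C}_B(K)$, so $\textbf{C}_G(H'K') = \textbf{C}_G(HK)$ while $|H'K'| \geq |HK|$.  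Maximality of $m_G(HK)$ forces equality, hence $H'K' = HK$ and $A\cap B \leq HK$.  I therefore assume from the outset that $A \cap B \leq H$ and $A \cap B \leq K$; proving $H \in \mathcal{CD}(A)$ and $K \in \mathcal{CD}(B)$ in this reduced setting is precisely the assertion that $H(A\cap B) \in \mathcal{CD}(A)$ and $K(A\cap B) \in \mathcal{CD}(B)$.

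Under this assumption $H \cap K = A \cap B$ and $\textbf{C}_A(H) \cap \textbf{C}_B(K) = A \cap B$, each inclusion being routine.  The usual $|XY| = |X||Y|/|X\cap Y|$ formula then yields the key identity
\[
m_G(HK) \;=\; \frac{m_A(H) \cdot m_B(K)}{|A \cap B|^2},
\]
which does the remaining work.  To see that $H \in \mathcal{CD}(A)$, I will test $HK$ against $XK$ for an arbitrary $X \in \mathcal{CD}(A)$.  Since $A \cap B \leq \textbf{Z}(A) \leq X$, the same formula applies to give $m_G(XK) = m^*(A)\,m_B(K)/|A \cap B|^2$; combining this with $m_G(HK) \geq m_G(XK)$ collapses to $m_A(H) \geq m^*(A)$, so $H \in \mathcal{CD}(A)$.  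By symmetry $K \in \mathcal{CD}(B)$.

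The remaining conclusions then drop out.  Substituting $m_A(H) = m^*(A)$ and $m_B(K) = m^*(B)$ into the displayed formula gives $m^*(G) = m^*(A)\,m^*(B)/|A \cap B|^2$.  For the containment $\mathcal{CD}(A)\cdot\mathcal{CD}(B) \subseteq \mathcal{CD}(G)$, take any $X \in \mathcal{CD}(A)$ and $Y \in \mathcal{CD}(B)$; both contain $A\cap B$, so the same computation yields $m_G(XY) = m^*(A)\,m^*(B)/|A \cap B|^2 = m^*(G)$ and hence $XY \in \mathcal{CD}(G)$.  The main obstacle I expect is the initial reduction: one must check carefully that replacing $(H,K)$ by $(H(A\cap B), K(A\cap B))$ neither alters the subgroup $HK$ (so that the hypothesis $HK \in \mathcal{CD}(G)$ transfers) nor changes what has to be proved, so that the proposition truly reduces to its specialization to subgroups containing $A \cap B$.
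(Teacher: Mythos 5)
Your proposal is correct and takes essentially the same route as the paper: both rest on the identities $\textbf{C}_G(HK)=\textbf{C}_A(H)\textbf{C}_B(K)$ and $|XY|=|X||Y|/|A\cap B|$ to get $m_G(HK)=m_A(H(A\cap B))\,m_B(K(A\cap B))/|A\cap B|^2$, and then compare this against the measure of a product of members of $\mathcal{CD}(A)$ and $\mathcal{CD}(B)$. The only cosmetic differences are that the paper obtains $HK=H(A\cap B)K(A\cap B)$ directly from $A\cap B\leq \textbf{Z}(G)\leq HK$ rather than via your maximality argument, and compares against $XY$ in one step rather than against $XK$ first.
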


\begin{proof}
Suppose $HK \in \mathcal{C}\mathcal{D}(G)$.  Then $A \cap B \leq \textbf{Z}(G) \leq HK$.  And so $HK = H(A \cap B)K(A \cap B)$.  Also, $A \cap B \leq \textbf{C}_A(H)$ and $A \cap B \leq \textbf{C}_B(K)$.

Then $$m^*(G) = m_G(HK) = |HK| \cdot |\textbf{C}_G(HK)| = |H(A \cap B)K(A \cap B)| \cdot |\textbf{C}_A(H)\textbf{C}_B(K)| =$$

$$\frac{|H(A \cap B)| \cdot |K(A \cap B)|}{|A \cap B|} \cdot \frac{|\textbf{C}_A(H)| \cdot |\textbf{C}_B(K)|}{|A \cap B|} = \frac{m_A(H(A \cap B)) \cdot m_B(K(A \cap B))}{|A \cap B|^2}.$$ 

Let $X \in \mathcal{C}\mathcal{D}(A)$ and $Y \in \mathcal{C}\mathcal{D}(B)$.  Then $A \cap B \leq \textbf{Z}(A) \leq X$, $A \cap B \leq \textbf{Z}(B) \leq Y$, $A \cap B \leq \textbf{C}_A(X)$, and $A \cap B \leq \textbf{C}_B(Y)$. And so

$$m_G(XY) = |XY| \cdot |\textbf{C}_G(XY)| = |XY| \cdot |\textbf{C}_A(X)\textbf{C}_B(Y)| = \frac{|X| \cdot |Y|}{|A \cap B|} \cdot \frac{|\textbf{C}_A(X)| \cdot |\textbf{C}_B(Y)|}{|A \cap B|} = $$

$$\frac{m_A(X) \cdot m_B(Y)}{|A \cap B|^2} = \dfrac{m^*(A) \cdot m^*(B)}{|A \cap B|^2}  \geq \frac{m_A(H(A \cap B)) \cdot m_B(K(A \cap B))}{|A \cap B|^2} = m^*(G).$$ 

Thus $m_G(XY) = \dfrac{m^*(A)m^*(B)}{|A \cap B|^2} = m^*(G)$, and we have that $\mathcal{C}\mathcal{D}(A) \cdot \mathcal{C}\mathcal{D}(B) \subseteq \mathcal{C}\mathcal{D}(G)$, and also $H(A \cap B) \in \mathcal{C}\mathcal{D}(A)$ and $K(A \cap B) \in \mathcal{C}\mathcal{D}(B)$.
\end{proof}

As it stands, Proposition \ref{prop: almost} proves Theorem \ref{thm:central_product} modulo the existence of a subgroup of the form $HK\in \CD(G)$. While Theorem \ref{thm:central_product} tells us what such a subgroup could be, i.e., take the product of the tops of $\CD(A)$ and $\CD(B)$, it is difficult to directly show that this product is in $\CD(G)$. Instead, we utilize some lemmata comparing information about how the Chermak--Delgado measure behaves across various subgroups. 

The following lemma is found in Issacs \cite[Lemma 1.43]{FGT}.

\begin{lemma} \label{lem: iss}
Suppose that $G$ is a finite group. If $H,K \leq G$, then 

$$m_G(H)\cdot m_G(K) \leq m_G(\langle H,K \rangle)\cdot m_G(H \cap K).$$

Moreover, equality holds if and only if $\langle H,K \rangle = HK$ and $\textbf{C}_G(H \cap K) = \textbf{C}_G(H)\textbf{C}_G(K)$.

\end{lemma}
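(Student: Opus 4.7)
The plan is to reduce the inequality to two independent size comparisons: one for the orders of the subgroups themselves, and one for the orders of their centralizers. The factor $m_G(H) \cdot m_G(K)$ splits as $(|H|\cdot|K|) \cdot (|\mathbf{C}_G(H)|\cdot|\mathbf{C}_G(K)|)$, and we aim to bound each factor separately by a corresponding factor of $m_G(\langle H,K\rangle) \cdot m_G(H \cap K)$.

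For the first factor I would invoke the standard product formula $|H|\cdot|K| = |HK|\cdot|H \cap K|$, valid set-theoretically, and observe $|HK| \leq |\langle H,K\rangle|$, with equality exactly when the set product $HK$ is already a subgroup, i.e.\ when $\langle H,K\rangle = HK$. This yields
\[
|H|\cdot|K| \;\leq\; |\langle H,K\rangle|\cdot|H \cap K|,
\]
which accounts for the group-order half of the desired inequality and also for the first half of the equality condition.

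For the centralizer factor I would apply the same product formula to $\mathbf{C}_G(H)$ and $\mathbf{C}_G(K)$, giving
\[
|\mathbf{C}_G(H)|\cdot|\mathbf{C}_G(K)| \;=\; |\mathbf{C}_G(H)\mathbf{C}_G(K)| \cdot |\mathbf{C}_G(H) \cap \mathbf{C}_G(K)|.
\]
The two key identifications are: (i) $\mathbf{C}_G(H) \cap \mathbf{C}_G(K) = \mathbf{C}_G(\langle H,K\rangle)$, since an element centralizes $H$ and $K$ iff it centralizes every word in them, and (ii) $\mathbf{C}_G(H)\mathbf{C}_G(K) \subseteq \mathbf{C}_G(H \cap K)$, the easy verification that a product $c_1 c_2$ with $c_i$ centralizing one of $H,K$ still commutes with any element of $H \cap K$. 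Combining (i) and (ii) produces
\[
|\mathbf{C}_G(H)|\cdot|\mathbf{C}_G(K)| \;\leq\; |\mathbf{C}_G(H \cap K)|\cdot|\mathbf{C}_G(\langle H,K\rangle)|,
\]
with equality precisely when $\mathbf{C}_G(H)\mathbf{C}_G(K) = \mathbf{C}_G(H \cap K)$.

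Multiplying the two inequalities and rearranging the four factors into the pairs $(|\langle H,K\rangle|,|\mathbf{C}_G(\langle H,K\rangle)|)$ and $(|H \cap K|,|\mathbf{C}_G(H \cap K)|)$ yields exactly $m_G(\langle H,K\rangle) \cdot m_G(H \cap K)$, and equality propagates iff both intermediate inequalities are equalities, giving the stated equality condition. There is no substantial obstacle here; the only point requiring a moment of care is the containment $\mathbf{C}_G(H)\mathbf{C}_G(K) \subseteq \mathbf{C}_G(H \cap K)$, which must be verified rather than assumed, and the symmetric role played by intersections and joins in the centralizer lattice versus the subgroup lattice.
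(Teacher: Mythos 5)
Your proposal is correct and follows essentially the same route as the paper: both arguments rest on the product formula $|H|\cdot|K| = |HK|\cdot|H\cap K|$ applied to the subgroups and to their centralizers, together with the identities $\mathbf{C}_G(H)\cap\mathbf{C}_G(K)=\mathbf{C}_G(\langle H,K\rangle)$ and $\mathbf{C}_G(H)\mathbf{C}_G(K)\subseteq\mathbf{C}_G(H\cap K)$ and the bound $|HK|\le|\langle H,K\rangle|$. The paper merely strings these into a single chain of (in)equalities rather than splitting into two factors and multiplying, so the difference is purely presentational.
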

\begin{proof}
\begin{flalign*}
m_G(H) \cdot m_G(K) &= |H| \cdot |\textbf{C}_G(H)| \cdot |K| \cdot |\textbf{C}_G(K)| \\
& = |H| \cdot |K| \cdot |\textbf{C}_G(H)| \cdot |\textbf{C}_G(K)|\\
& = |HK| \cdot |H \cap K | \cdot |\textbf{C}_G(H) \textbf{C}_G(K)| \cdot |\textbf{C}_G(H) \cap \textbf{C}_G(K)|\\
& \leq |\langle H, K \rangle |\cdot |\textbf{C}_G(H)\cap \textbf{C}_G(K)| \cdot  |H\cap K| \cdot |\textbf{C}_G(H) \textbf{C}_G(K)|\\
& \leq |\langle H, K \rangle |\cdot |\textbf{C}_G(H)\cap \textbf{C}_G(K)| \cdot  |H\cap K| \cdot |\textbf{C}_G(H\cap K)|\\
& = m_G(\langle H,K\rangle) \cdot m_G(H\cap K). 
\end{flalign*}
\end{proof}

The following lemma generalizes a result of An \cite[Lemma 3.2]{An_22}.

\begin{lemma} \label{lem: an1}
Suppose that $G$ is a finite group.  If $K \leq X \leq H \leq G$, then

$$\frac{m_H(K)}{m_G(K)} \leq \frac{m_H(X)}{m_G(X)}.$$

Moreover, equality holds if and only if $\textbf{C}_G(K) \subseteq H\textbf{C}_G(X)$.
\end{lemma}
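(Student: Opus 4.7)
The plan is to cancel the common factor of $|K|$ (resp.\ $|X|$) from numerator and denominator, reducing the inequality to a statement purely about centralizers. Specifically, for any subgroup $Y$ of $H$ we have
\[
\frac{m_H(Y)}{m_G(Y)} = \frac{|Y|\cdot|\mathbf{C}_H(Y)|}{|Y|\cdot|\mathbf{C}_G(Y)|} = \frac{|H\cap \mathbf{C}_G(Y)|}{|\mathbf{C}_G(Y)|},
\]
using $\mathbf{C}_H(Y)=H\cap\mathbf{C}_G(Y)$. Setting $A=\mathbf{C}_G(K)$ and $B=\mathbf{C}_G(X)$, the hypothesis $K\leq X$ gives $B\leq A$, and the desired inequality becomes
\[
\frac{|H\cap A|}{|A|}\;\leq\;\frac{|H\cap B|}{|B|},
\]
or equivalently $[A:H\cap A]\geq [B:H\cap B]$.

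The key step is to recognize this as a statement about set products. Applying the identity $|HA|=|H||A|/|H\cap A|$ (and the analogous one for $B$), the inequality rearranges to $|HA|\geq|HB|$. Since $B\leq A$, we have the set inclusion $HB\subseteq HA$, which immediately yields the desired inequality on cardinalities.

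For the equality condition, equality in $|HB|\leq|HA|$ holds iff $HB=HA$. Since $HB\subseteq HA$ is automatic, the non-trivial direction is $HA\subseteq HB$, which (since $H\subseteq HB$) is equivalent to $A\subseteq HB$, i.e., $\mathbf{C}_G(K)\subseteq H\mathbf{C}_G(X)$. Unwinding the identifications shows this is precisely the claimed condition.

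I do not expect any real obstacles: once the $|Y|$ factors are cancelled and one thinks of $[A:H\cap A]$ as $|HA|/|H|$, the argument reduces to monotonicity of the set product $HX$ in the second argument. The only point requiring a moment of care is phrasing the equality condition in the asymmetric form $\mathbf{C}_G(K)\subseteq H\mathbf{C}_G(X)$ rather than the symmetric $H\mathbf{C}_G(K)=H\mathbf{C}_G(X)$, which is handled by noting that the reverse containment $\mathbf{C}_G(X)\subseteq\mathbf{C}_G(K)$ is free from $K\leq X$.
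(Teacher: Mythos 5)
Your proof is correct and follows essentially the same route as the paper's: cancel the $|K|$ and $|X|$ factors, rewrite $|H\cap \mathbf{C}_G(Y)|/|\mathbf{C}_G(Y)|$ as $|H|/|H\mathbf{C}_G(Y)|$ via the product formula, and compare $|H\mathbf{C}_G(X)|\leq |H\mathbf{C}_G(K)|$ using the containment of set products, with the equality condition falling out exactly as you describe.
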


\begin{proof}
Note that

$$\frac{m_H(K)}{m_G(K)} = \frac{|K|\cdot |\textbf{C}_H(K)|}{|K|\cdot |\textbf{C}_G(K)|} = \frac{|\textbf{C}_H(K)|}{|\textbf{C}_G(K)|} = \frac{|H \cap \textbf{C}_G(K)|}{|\textbf{C}_G(K)|} = \frac{|H|}{|H\textbf{C}_G(K)|}$$

and similarly

$$\frac{m_H(X)}{m_G(X)} = \frac{|X|\cdot |\textbf{C}_H(X)|}{|X|\cdot |\textbf{C}_G(X)|} = \frac{|\textbf{C}_H(X)|}{|\textbf{C}_G(X)|} = \frac{|H \cap \textbf{C}_G(X)|}{|\textbf{C}_G(X)|} = \frac{|H|}{|H\textbf{C}_G(X)|}.$$

Since $K \leq X$, $\textbf{C}_G(X) \leq \textbf{C}_G(K)$.  So $H\textbf{C}_G(X) \subseteq H\textbf{C}_G(K)$ and we have $|H\textbf{C}_G(X)| \leq |H\textbf{C}_G(K)|$, where equality holds if and only if $\textbf{C}_G(K) \subseteq H\textbf{C}_G(X)$.  Thus

$$\frac{m_H(K)}{m_G(K)} = \frac{m_H(X) \cdot |H\textbf{C}_G(X)|}{m_G(X) \cdot |H\textbf{C}_G(K)|} \leq \frac{m_H(X)}{m_G(X)},$$

where equality holds if and only if $\textbf{C}_G(K) \subseteq H\textbf{C}_G(X)$.
\end{proof}

The last lemma we will use to prove Theorem \ref{thm:central_product} is one step below that of a central product. Here the group $G$ will be equal to $H\textbf{C}_G(X)$ where $X\leq H \leq G$.  Note that a group $G$ is a central product if $G = H\textbf{C}_G(H)$ for some subgroup $H$ of $G$.  This lemma generalizes a result of An \cite[Lemma 3.3]{An_22}.

\begin{lemma} \label{lem: an2}
Suppose that $G$ is a finite group and $X \leq H \leq G$ such that $G = H\textbf{C}_G(X)$.  If $X \in \mathcal{C}\mathcal{D}(H)$, then for every $Y \in \mathcal{C}\mathcal{D}(G)$, we have $\langle X,Y \rangle \in \mathcal{C}\mathcal{D}(G)$ and $X \cap Y \in \mathcal{C}\mathcal{D}(H)$.  Furthermore, $\langle X,Y \rangle = XY$ and $\textbf{C}_G(X \cap Y) = \textbf{C}_G(X)\textbf{C}_G(Y)$.
\end{lemma}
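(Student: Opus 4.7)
The plan is to combine Lemma \ref{lem: iss} applied in $G$ to the pair $X, Y$ with Lemma \ref{lem: an1} applied to the chain $X \cap Y \leq X \leq H$, and exploit the hypothesis $G = H\textbf{C}_G(X)$ to force equality in the latter. The target identities to obtain are $m_G(\langle X, Y \rangle) = m^*(G)$ and $m_H(X \cap Y) = m^*(H)$, which deliver membership in the respective Chermak--Delgado lattices; the supplementary statements about $XY$ and the centralizer of $X \cap Y$ will drop out of the equality clause of Lemma \ref{lem: iss}.

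The first step is to apply Lemma \ref{lem: iss} in $G$ to get
\[
m_G(X) \cdot m_G(Y) \leq m_G(\langle X, Y \rangle) \cdot m_G(X \cap Y).
\]
Using $m_G(Y) = m^*(G) \geq m_G(\langle X, Y \rangle)$, this simplifies to the first key inequality $m_G(X) \leq m_G(X \cap Y)$.

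The second step goes in the other direction. I would apply Lemma \ref{lem: an1} with $K := X \cap Y$, noting that $X \cap Y \leq X \leq H$. Because $G = H\textbf{C}_G(X)$, the containment $\textbf{C}_G(X \cap Y) \subseteq H\textbf{C}_G(X)$ is automatic, so the \emph{equality} case of Lemma \ref{lem: an1} applies and gives
\[
m_H(X \cap Y) \cdot m_G(X) = m_H(X) \cdot m_G(X \cap Y).
\]
Combining this identity with $m_H(X) = m^*(H) \geq m_H(X \cap Y)$ yields the reverse inequality $m_G(X \cap Y) \leq m_G(X)$.

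Together the two inequalities force $m_G(X) = m_G(X \cap Y)$. Plugging this back into the Lemma \ref{lem: iss} chain collapses it to an equality, which simultaneously shows $m_G(\langle X, Y \rangle) = m_G(Y) = m^*(G)$ (so $\langle X, Y \rangle \in \mathcal{CD}(G)$) and activates the equality clause of Lemma \ref{lem: iss} to conclude $\langle X, Y \rangle = XY$ and $\textbf{C}_G(X \cap Y) = \textbf{C}_G(X)\textbf{C}_G(Y)$. Finally, feeding $m_G(X) = m_G(X \cap Y)$ back into the Lemma \ref{lem: an1} equality forces $m_H(X \cap Y) = m_H(X) = m^*(H)$, so $X \cap Y \in \mathcal{CD}(H)$. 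I expect the only subtle point to be recognizing that the hypothesis $G = H\textbf{C}_G(X)$ is exactly what promotes the Lemma \ref{lem: an1} inequality to an equality for arbitrary $K \leq X$; everything else is a squeeze between two inequalities.
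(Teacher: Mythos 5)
Your proof is correct and uses essentially the same argument as the paper: the equality case of Lemma \ref{lem: an1} (triggered by $G = H\textbf{C}_G(X)$) combined with Lemma \ref{lem: iss} to squeeze $m_G(X) = m_G(X \cap Y)$, after which everything collapses to equalities. The only difference is cosmetic --- you derive $m_G(X) \leq m_G(X\cap Y)$ from Lemma \ref{lem: iss} first and the reverse inequality from Lemma \ref{lem: an1} second, whereas the paper assembles the same two inequalities in the opposite order.
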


\begin{proof}
Let $Y \in \mathcal{C}\mathcal{D}(G)$.  Since $\textbf{C}_G(X \cap Y) \leq G = H\textbf{C}_G(X)$, by Lemma \ref{lem: an1},

$$\frac{m_H(X \cap Y)}{m_G(X \cap Y)} = \frac{m_H(X)}{m_G(X)}.$$

Since $X \in \mathcal{C}\mathcal{D}(H)$, $m_H(X \cap Y) \leq m_H(X)$.  It follows that $m_G(X \cap Y) \leq m_G(X)$.  By Lemma \ref{lem: iss},

$$m_G(X)\cdot m_G(Y) \leq m_G(\langle X,Y \rangle)\cdot m_G(X \cap Y).$$

It follows that $m_G(\langle X,Y \rangle) \geq m_G(Y) = m^*(G)$.  Hence $m_G(\langle X,Y \rangle) = m_G(Y)$ and $\langle X,Y \rangle \in \mathcal{C}\mathcal{D}(G)$. Hence  $m_G(X) \leq m_G(X \cap Y)$, and so $m_G(X) = m_G(X \cap Y)$.

And so

$$m_G(X)\cdot m_G(Y) = m_G(\langle X,Y \rangle)\cdot m_G(X \cap Y),$$

and by Lemma \ref{lem: iss} we have that $\langle X,Y \rangle = XY$ and $\textbf{C}_G(X \cap Y) = \textbf{C}_G(X)\textbf{C}_G(Y)$.

Finally, since 

$$\frac{m_H(X \cap Y)}{m_G(X \cap Y)} = \frac{m_H(X)}{m_G(X)},$$

we conclude that $m_H(X \cap Y) = m_H(X) = m^*(H)$.  Hence $X \cap Y \in \mathcal{C}\mathcal{D}(H)$.
\end{proof}

We can now prove Theorem \ref{thm:central_product} which states that for a finite group $G=AB$ a central product, we have $\CD(A) \cdot \CD(G) \subseteq \CD(G)$, and furthermore we have that the top (resp.\ bottom) of $\CD(G)$ is equal to the product of the tops (resp.\ bottoms) of $\CD(A)$ and $\CD(B)$. 

\begin{proof}[Proof of Theorem \ref{thm:central_product}]
We write $T_A$, $T_B$, $T_G$ for the top elements of the Chermak--Delgado lattices of $A$, $B$ and $G$ respectively. Similarly $B_A, B_B, B_G$ refer to the bottom elements of these lattices. 

For any $X_1 \leq A$, $G=A\textbf{C}_G(X_1)$, and for any $X_2 \leq B$, $G=B\textbf{C}_G(X_2)$, and so Lemma \ref{lem: an2} applies for any $X_1 \in \mathcal{C}\mathcal{D}(A)$ and any $X_2 \in \mathcal{C}\mathcal{D}(B)$, with any $Y \in \mathcal{C}\mathcal{D}(G)$.

By Lemma \ref{lem: an2}, $T_AT_G \in \mathcal{C}\mathcal{D}(G)$, and so $T_A \leq T_G$.  Similarly $T_B \leq T_G$.  So $T_AT_B \leq T_G$.

By Lemma \ref{lem: an2}, $B_A \cap B_G \in \mathcal{C}\mathcal{D}(A)$, and so $B_A = B_A \cap B_G$.  By Lemma \ref{lem: an2}, $\textbf{C}_G(B_A) = \textbf{C}_G(B_A \cap B_G) = \textbf{C}_G(B_A)\textbf{C}_G(B_G) = \textbf{C}_G(B_A)T_G$.  So $T_G \leq \textbf{C}_G(B_A) = T_AB$.  Similarly $T_G \leq AT_B$.  We see that $T_G \leq AT_B \cap T_AB = T_AT_B(A \cap B)$.  And $A \cap B \leq \textbf{Z}(A) \leq T_A$, and so $T_G \leq T_AT_B$.

Thus, $T_AT_B = T_G$, and so $B_G = \textbf{C}_G(T_G) = \textbf{C}_A(T_A)\textbf{C}_B(T_B) = B_AB_B$.  We apply Proposition \ref{prop: almost} to complete the proof. 
\end{proof}

We provide two examples of equality in Theorem \ref{thm:central_product}.

\begin{proposition} \label{prop: direct}
Suppose that a finite group $G = A \times B$ is a direct product.  Then $\CD(G) = \CD(A) \cdot \CD(B)$.
\end{proposition}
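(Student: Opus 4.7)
The one direction $\CD(A) \cdot \CD(B) \subseteq \CD(G)$ is already given to us by Theorem \ref{thm:central_product}, since a direct product is the special case of a central product with $A \cap B = 1$. So the plan is really to establish the reverse inclusion $\CD(G) \subseteq \CD(A) \cdot \CD(B)$ by showing that every $Y \in \CD(G)$ is forced to split as a product of coordinate projections.

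Concretely, I would fix $Y \in \CD(G)$ and let $\pi_A : G \to A$ and $\pi_B : G \to B$ be the projection homomorphisms from the direct product structure. Setting $H = \pi_A(Y)$ and $K = \pi_B(Y)$, I have the containment $Y \leq H \times K = HK$. The key computation is to observe that an element $(a,b) \in A \times B$ centralizes every $y = (h_y,k_y) \in Y$ if and only if $a$ commutes with every such $h_y$ and $b$ with every such $k_y$. This yields the centralizer identity
\[
\textbf{C}_G(Y) = \textbf{C}_A(H) \times \textbf{C}_B(K).
\]

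From here the proof is a single measure inequality. Using $|Y| \leq |H||K|$ together with the centralizer identity,
\[
m_G(Y) \;\leq\; |H|\cdot|K|\cdot|\textbf{C}_A(H)|\cdot|\textbf{C}_B(K)| \;=\; m_A(H)\cdot m_B(K) \;\leq\; m^*(A)\cdot m^*(B),
\]
and the Corollary (applied with $|A \cap B|=1$) gives $m^*(A)\cdot m^*(B) = m^*(G)$. Since $Y \in \CD(G)$, the outer inequality is actually an equality, which forces equality throughout: $|Y| = |H||K|$ (so $Y = HK$), $m_A(H) = m^*(A)$ (so $H \in \CD(A)$), and $m_B(K) = m^*(B)$ (so $K \in \CD(B)$). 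Hence $Y \in \CD(A) \cdot \CD(B)$, completing the argument.

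There is no real obstacle here beyond picking the right decomposition of $Y$; once one writes $Y$ in terms of its projections rather than its intersections with the factors, the centralizer identity and the Chermak--Delgado measure do all the work. The only point that might require a moment's care is verifying $\textbf{C}_G(Y) = \textbf{C}_A(H) \times \textbf{C}_B(K)$ where $H,K$ are the \emph{projections} (not the intersections $Y \cap A$, $Y \cap B$), but this is immediate from the coordinatewise description of commutation in $A \times B$.
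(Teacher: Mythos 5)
Your proof is correct. Note that the paper does not actually prove Proposition \ref{prop: direct} itself; it simply cites \cite{Bre_12}, so there is no internal argument to compare against. Your argument is a clean, self-contained version of the standard one: the inclusion $\CD(A)\cdot\CD(B) \subseteq \CD(G)$ comes from Theorem \ref{thm:central_product}, and for the reverse inclusion you take $Y \in \CD(G)$, pass to the projections $H = \pi_A(Y)$, $K = \pi_B(Y)$, and use the identity $\textbf{C}_G(Y) = \textbf{C}_A(H) \times \textbf{C}_B(K)$ (which really does require the projections rather than $Y \cap A$ and $Y \cap B$ --- you are right to flag that) to get
\[
m_G(Y) \leq |H|\,|K|\,|\textbf{C}_A(H)|\,|\textbf{C}_B(K)| = m_A(H)\, m_B(K) \leq m^*(A)\, m^*(B) = m^*(G),
\]
where the last equality is the paper's Corollary with $|A \cap B| = 1$. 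Maximality of $m_G(Y)$ forces equality throughout, giving $Y = HK$ with $H \in \CD(A)$ and $K \in \CD(B)$. Every step checks out; the only implicit fact worth making explicit is that $|HK| = |H|\,|K|$ because $H \cap K \leq A \cap B = 1$, so that $|Y| = |H|\,|K|$ together with $Y \leq HK$ indeed yields $Y = HK$. It is also worth observing that this projection argument is genuinely special to the direct product: for a proper central product the projections are not well defined coordinatewise, which is why the paper's Section \ref{sec: main} has to work much harder (via $\pi_A$, Lemma \ref{lem: pi}, and Proposition \ref{prop: almost}) and why only the containment, not equality, holds in general.
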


\begin{proposition} \label{prop: nonproper}
Suppose that a finite group $G = AB$ with $B \leq \textbf{Z}(G)$.  Then $\CD(G) = \CD(A) \cdot \{ B \}$.
\end{proposition}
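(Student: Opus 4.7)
The plan is to get ``$\supseteq$'' for free from Theorem~\ref{thm:central_product} and to obtain ``$\subseteq$'' by peeling off the central factor $B$ using Dedekind's modular law, followed by a direct measure comparison. Since $B \leq \mathbf{Z}(G)$, the decomposition $G = AB$ is automatically a central product and $B$ is abelian, so $m_B(Y) = |Y| \cdot |B|$ is maximized precisely at $Y = B$; hence $\mathcal{CD}(B) = \{B\}$. Theorem~\ref{thm:central_product} then immediately yields $\mathcal{CD}(A) \cdot \{B\} \subseteq \mathcal{CD}(G)$.

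For the reverse inclusion, I would fix an arbitrary $X \in \mathcal{CD}(G)$ and show $X = HB$ for some $H \in \mathcal{CD}(A)$. Two easy observations do this: $B \leq \mathbf{Z}(G) \leq X$ (because every element of the Chermak--Delgado lattice contains the center) and $X \leq G = AB$. Dedekind's modular law then gives $X = (X \cap A) B$, so setting $H := X \cap A$ puts $X$ in the expected form.

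The remaining task, and the only place where one needs to be careful, is showing $H \in \mathcal{CD}(A)$. Here I would run a direct $m_G(HB)$ computation. Since $A \cap B \leq \mathbf{Z}(G) \leq H$, we have $H \cap B = A \cap B$, so $|HB| = |H| \cdot |B|/|A \cap B|$. Since $B \leq \mathbf{Z}(G)$, we have $\mathbf{C}_G(HB) = \mathbf{C}_A(H) \cdot B$ with $\mathbf{C}_A(H) \cap B = A \cap B$, giving $|\mathbf{C}_G(HB)| = |\mathbf{C}_A(H)| \cdot |B|/|A \cap B|$. Multiplying these,
\[
m_G(HB) \;=\; \frac{m_A(H) \cdot |B|^2}{|A \cap B|^2}.
\]
Combining $m_G(HB) = m^*(G)$ with the Corollary (noting $m^*(B) = |B|^2$ since $B$ is abelian) forces $m_A(H) = m^*(A)$, i.e., $H \in \mathcal{CD}(A)$. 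The only subtlety is ensuring that $A \cap B$ appears as a common index in both the $|HB|$ and $|\mathbf{C}_G(HB)|$ formulas, which is what makes the two factors of $|B|/|A \cap B|$ line up with the Corollary on the nose; beyond this bookkeeping there is no real obstacle, because Theorem~\ref{thm:central_product} has already done the genuinely structural work.
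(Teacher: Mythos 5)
Your proof is correct, and it follows the same outline as the paper's (get $\supseteq$ from Theorem~\ref{thm:central_product}, then write an arbitrary $X \in \CD{(G)}$ as $HB$ with $H \leq A$ and show $H \in \CD{(A)}$), but the two supporting steps are carried out by different means. For the decomposition, the paper invokes its projection machinery: it defines $\pi_A(U)$, proves it is a subgroup containing $A \cap B$ (Lemma~\ref{lem: pi_is_subgroup}), and then uses Lemma~\ref{lem: pi} to get $U = \pi_A(U)B$. You instead apply Dedekind's modular law to $B \leq X \leq AB$ to get $X = (X \cap A)B$ in one line; since $B \leq X$ forces $\pi_A(X) = X \cap A$ here, the two decompositions coincide, but your route is shorter and avoids the two $\pi_A$ lemmas entirely (the paper needs $\pi_A$ elsewhere, e.g.\ in the proof of Theorem~\ref{thm: levels}, which is presumably why it sets up the general tool). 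For the membership $H \in \CD{(A)}$, the paper simply cites Proposition~\ref{prop: almost} (which says that whenever $HK \in \CD{(G)}$ with $H \leq A$, $K \leq B$, one has $H(A\cap B) \in \CD{(A)}$, and $H(A \cap B) = H$ since $A \cap B \leq \textbf{Z}(G) \leq H$); you rederive the relevant identity $m_G(HB) = m_A(H)\,|B|^2/|A\cap B|^2$ by hand and compare with the Corollary. Your computation is correct --- the index bookkeeping with $H \cap B = \textbf{C}_A(H) \cap B = A \cap B$ checks out --- but it duplicates the calculation already inside Proposition~\ref{prop: almost}, so you could shorten your argument by citing that proposition directly once you have $X = (X\cap A)B$.
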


Proposition \ref{prop: direct} appears in \cite{Bre_12}.  We shall prove Proposition \ref{prop: nonproper} in a moment, but first a few facts regarding central products and group products in general.

\begin{lemma} \label{lem: prod}
Suppose that a group $G=AB$.  Suppose $A \cap B \leq H_1 \leq A$, $A \cap B \leq H_2 \leq A$, $A \cap B \leq K_1 \leq B$, and $A \cap B \leq K_2 \leq B$.  If $H_1K_1 \subseteq H_2K_2$, then $H_1 \leq H_2$ and $K_1 \leq K_2$; and if equality holds then $H_1 = H_2$ and $K_1 = K_2$.
\end{lemma}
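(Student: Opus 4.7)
The plan is to unpack the set containment $H_1K_1 \subseteq H_2K_2$ element-by-element. Pick an arbitrary $h_1 \in H_1$; writing $h_1 = h_1 \cdot 1 \in H_1K_1$, the hypothesis yields $h_2 \in H_2$ and $k_2 \in K_2$ with $h_1 = h_2 k_2$. The key trick is to examine $k_2 = h_2^{-1}h_1$: it lies in $A$ (since $H_1, H_2 \leq A$) and in $B$ (since $K_2 \leq B$), so $k_2 \in A \cap B$. Since $A \cap B \leq H_2$ by hypothesis, this forces $h_1 = h_2 k_2 \in H_2$, and hence $H_1 \leq H_2$. The argument for $K_1 \leq K_2$ is symmetric, starting with $k_1 = 1 \cdot k_1 \in H_1K_1$ and using $A \cap B \leq K_2$.

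For the equality claim, I would exploit the fact that $A \cap B$ sits inside all four subgroups. Because $H_i \leq A$ and $K_i \leq B$, one has $H_i \cap K_i \leq A \cap B$, and the reverse containment holds by hypothesis, so $H_i \cap K_i = A \cap B$ for $i = 1,2$. Consequently
\[
|H_i K_i| = \frac{|H_i|\cdot|K_i|}{|A \cap B|}.
\]
If $H_1K_1 = H_2K_2$, then $|H_1||K_1| = |H_2||K_2|$, and combined with the containments $H_1 \leq H_2$ and $K_1 \leq K_2$ just established, we force $|H_1| = |H_2|$ and $|K_1| = |K_2|$, hence equality of the subgroups.

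There is really no serious obstacle here; the only subtlety is remembering to invoke the hypothesis $A \cap B \leq H_2$ (and $A \cap B \leq K_2$) to absorb the ``cross'' element $k_2 = h_2^{-1}h_1 \in A \cap B$ back into the intended factor. Without that hypothesis the lemma would fail, which is why it is stated as a standing assumption on all four subgroups.
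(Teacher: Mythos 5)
Your proof of the containment half is essentially the paper's argument: the same absorption trick of recognizing the ``cross'' element as lying in $A \cap B$ and pushing it into $H_2$ (resp.\ $K_2$) via the standing hypothesis. The only cosmetic difference is that you specialize to $h_1 = h_1\cdot 1$ and $k_1 = 1\cdot k_1$ separately, whereas the paper treats a general product $h_1k_1 = h_2k_2$ and extracts $x = h_2^{-1}h_1 = k_2k_1^{-1} \in A\cap B$ once, handling both factors simultaneously; both are correct. Where you genuinely diverge is the equality clause. The paper simply observes that equality of the two product sets gives both containments $H_1K_1 \subseteq H_2K_2$ and $H_2K_2 \subseteq H_1K_1$, so the first part applies in each direction and yields $H_1 = H_2$, $K_1 = K_2$ with no further work. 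You instead run a counting argument via $|H_iK_i| = |H_i|\,|K_i|/|H_i \cap K_i|$ and the identification $H_i \cap K_i = A \cap B$. This is correct for finite groups, but note two things: first, it is more machinery than needed, since the double-containment observation is immediate; second, the lemma as stated assumes only that $G = AB$ is a group (no finiteness), and your cardinality argument silently requires the subgroups to be finite, whereas the paper's symmetric argument does not. In the context of this paper every application is to finite groups, so nothing breaks, but the paper's route is both shorter and more general.
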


\begin{proof}
Suppose $H_1K_1 \subseteq H_2K_2$, and let $h_1 \in H_1$ and $k_1 \in K_1$ be arbitrary.  Then $h_1k_1 = h_2k_2$ for some $h_2 \in H_2$ and some $k_2 \in K_2$.  And so $x = h_2^{-1}h_1 = k_2k_1^{-1} \in H_1 \cap K_1 \leq A \cap B$.  And since $A \cap B \leq H_2$ and $A \cap B \leq K_2$, we have that $h_1 = h_2x \in H_2$ and $k_1 = x^{-1}k_2 \in K_2$.  Hence $H_1 \leq H_2$ and $K_1 \leq K_2$.

If $H_1K_1$ and $H_2K_2$ coincide, then $H_1K_1 \subseteq H_2K_2$ implies that $H_1 \leq H_2$ and $K_1 \leq K_2$, and similarly $H_2K_2 \subseteq H_1K_1$ implies that $H_2 \leq H_1$ and $K_2 \leq K_1$.
\end{proof}

Note that Lemma \ref{lem: prod} applies for any finite central product $G=AB$ with $H_1,H_2 \in \CD(A)$ and $K_1,K_2 \in \CD(B)$, since $A \cap B \leq Z(A)$ and $A \cap B \leq Z(B)$ and all subgroups in the Chermak--Delgado lattice contain the center.  It follows that for any finite central product $G=AB$, we have that the sum of the heights of $\CD(A)$ and $\CD(B)$ cannot exceed the height of $\CD(G)$.  For if $CD(A)$ has height $n$, and say $B_A = G_0 < G_1 < \cdots < G_n = T_A$ is a chain in $\CD(A)$, and if $CD(B)$ has height $m$, and say $B_B = H_0 < H_1 < \cdots < H_m = T_B$ is a chain in $\CD(B)$, then we have by Lemma \ref{lem: prod} that $G_0H_0 < G_1H_0 < \cdots < G_nH_0 < G_nH_1 < \cdots < G_nH_m$, and by Theorem \ref{thm:central_product}, this chain of length $n+m$ lives in $\CD(G)$. Of course, our Theorem \ref{thm: levels} gives the precise relationship between heights in $\CD(A)$, $\CD(B)$, and $\CD(G)$.

If a group $G=AB$ is a central product, then for any $U\leq G$, we define $\pi_A(U) = \{ a \, | \, g=ab \text{ with } g \in U \text{ and } a \in A \text{ and } b \in B \}$ and $\pi_B(U) = \{ b \, | \, g=ab \text{ with } g \in U \text{ and } a \in A \text{ and } b \in B \}$.

\begin{lemma} \label{lem: pi_is_subgroup}
Suppose a group $G=AB$ is a central product, and $U \leq G$.  Then $\pi_{A}(U)$ is a subgroup of $A$ and, furthermore, $A \cap B \leq \pi_{A}(U) \leq A$.  A similar result is true for $\pi_{B}(U)$.
\end{lemma}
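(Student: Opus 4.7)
The plan is to verify the subgroup axioms for $\pi_A(U)$ by direct computation, with the essential input being that elements of $A$ commute with elements of $B$. The argument splits into two pieces: showing closure/inverses, and then showing the containment $A \cap B \leq \pi_A(U)$.

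For the subgroup property, I would start with non-emptiness via the trivial decomposition $1 = 1 \cdot 1$ of the identity of $U$. For closure, given $a_1, a_2 \in \pi_A(U)$, choose witnessing decompositions $g_i = a_i b_i \in U$ and rewrite $g_1 g_2 = a_1 b_1 a_2 b_2$ as $(a_1 a_2)(b_1 b_2)$ using $[A,B]=1$; since $g_1 g_2 \in U$, this exhibits $a_1 a_2 \in \pi_A(U)$. For inverses, compute $g^{-1} = (ab)^{-1} = a^{-1} b^{-1}$ (again commuting past each other), which certifies $a^{-1} \in \pi_A(U)$.

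For the containment $A \cap B \leq \pi_A(U)$, the key observation is that the representation $g = ab$ with $a \in A, b \in B$ is generally not unique: for any $z \in A \cap B$, the identity $g = (az)(z^{-1}b)$ gives another valid decomposition, since $z \in A$ and $z^{-1} \in B$. Applying this to $g = 1 \in U$ yields $1 = z \cdot z^{-1}$ as a decomposition witnessing $z \in \pi_A(U)$ for every $z \in A \cap B$.

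There is no real obstacle here; the content of the lemma is simply that the ``set-theoretic projection'' $\pi_A$ is well-behaved as a consequence of commutativity between $A$ and $B$. The statement for $\pi_B(U)$ follows by swapping the roles of $A$ and $B$ throughout.
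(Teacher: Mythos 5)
Your proof is correct and follows essentially the same route as the paper: direct verification of the subgroup axioms for $\pi_A(U)$ using $[A,B]=1$, followed by the re-decomposition $g=(az)(z^{-1}b)$ to show $A\cap B\leq \pi_A(U)$. Your containment step is in fact slightly cleaner than the paper's, which applies that same re-decomposition to an arbitrary $u=hb'\in U$ and then must invoke the already-established subgroup property to cancel $h$ and recover $z$, whereas your choice $g=1=z\cdot z^{-1}$ exhibits $z\in\pi_A(U)$ immediately.
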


\begin{proof}
We prove the assertion for $\pi_{A}(U)$, and the assertion for $\pi_{B}(U)$ is similar.

Clearly $\pi_{A}(U) \subseteq A$.  We show that $\pi_{A}(U)$ is a subgroup of $A$.  Note that $\pi_{A}(U)$ is nonempty as $1 \in \pi_{A}(U)$.  Let $x,y \in \pi_{A}(U)$.  We show that $xy^{-1} \in \pi_{A}(U)$.

So there exists $u \in U$ and $v \in U$ so that $u=xb_1$ and $v=yb_2$ with $b_1,b_2 \in B$.

And so $v^{-1} = y^{-1}{b_2}^{-1}$, and we have $uv^{-1} = xy^{-1} b_1{b_2}^{-1}$ with $xy^{-1} \in A$ and $b_1{b_2}^{-1} \in B$.  And since $uv^{-1} \in U$, it follows that $xy^{-1} \in \pi_{A}(U)$.

Finally, we show that $A \cap B \leq \pi_{A}(U)$.  Let $z \in A \cap B$. Then $z \in A$ and $z=b$ for some $b \in B$.  And so $z^{-1} = b^{-1}$.  Let $h \in \pi_{A}(U)$.  Then there is $u \in U$ so that $u=hb'$ for some $b' \in B$.  And so $\displaystyle u = uzz^{-1} = hb'zb^{-1}= hz b'b^{-1}$ and we have $hz \in A$ and $b'b^{-1} \in B$.  Hence $hz \in \pi_{A}(U)$.  And since $\pi_{A}(U)$ is a subgroup, $h^{-1}hz = z \in \pi_{A}(U)$.
\end{proof}

\begin{lemma} \label{lem: pi}
Suppose a group $G=AB$ is a central product.  Suppose $K \leq B$ and suppose $A \cap B \leq U \leq G$.  If $K \leq U \leq AK$, then $U = \pi_A(U)K$.
\end{lemma}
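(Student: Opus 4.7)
My plan is to establish $U = \pi_A(U)K$ by first proving the auxiliary equality $\pi_A(U) = U \cap A$, and then showing that the hypothesis $U \leq AK$ forces $U = (U \cap A)K$. The key point is that in a central product the decomposition $g = ab$ of an element of $G = AB$ is unique only up to the ambiguity subgroup $A \cap B$, and the hypothesis $A \cap B \leq U$ is precisely what allows that ambiguity to be absorbed into $U$.

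For $\pi_A(U) = U \cap A$, the inclusion $U \cap A \subseteq \pi_A(U)$ is immediate from the definition (take the trivial decomposition $u = u \cdot 1$). For the reverse inclusion, I would take $a \in \pi_A(U)$, so there exists $b \in B$ with $u := ab \in U$. Using the hypothesis $U \leq AK$, the same $u$ admits a second decomposition $u = a' k'$ with $a' \in A$ and $k' \in K \leq B$. Equating the two decompositions yields $a^{-1}a' = b(k')^{-1} \in A \cap B$. Now $a' = u(k')^{-1} \in U$ since $K \leq U$, so $a' \in U \cap A$; and since $A \cap B \leq U$ by hypothesis, multiplying by the element $a^{-1}a' \in A \cap B$ keeps us inside $U$, giving $a \in U \cap A$.

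Next I would prove $U = (U \cap A)K$. The inclusion $(U \cap A)K \subseteq U$ follows from $K \leq U$. For the reverse inclusion, let $u \in U \leq AK$ and write $u = ak$ with $a \in A$ and $k \in K$. Since $k \in K \leq U$, we get $a = uk^{-1} \in U$, hence $a \in U \cap A$ and $u \in (U \cap A)K$. Combining this with the previous paragraph gives $U = (U \cap A)K = \pi_A(U)K$, as required.

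I do not anticipate a genuine obstacle here; the lemma is essentially bookkeeping about non-unique decompositions in a central product. The one spot requiring care is the step where we use $A \cap B \leq U$ to conclude that $a \in \pi_A(U)$ actually lies in $U \cap A$, since without that hypothesis the element $a^{-1}a' \in A \cap B$ might not belong to $U$, and the argument would break down. It is worth noting in passing that Lemma \ref{lem: pi_is_subgroup} is what guarantees $\pi_A(U)$ is even a subgroup of $A$ containing $A \cap B$, so that the expression $\pi_A(U)K$ makes sense as a set theoretic product of subgroups.
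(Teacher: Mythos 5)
Your proof is correct and follows essentially the same route as the paper's: the core computation in both is to take $ab\in U$, use $U\leq AK$ to rewrite it as $a'k$ with $k\in K$, observe that $a$ and $a'$ differ by an element of $A\cap B$, and then use $K\leq U$ and $A\cap B\leq U$ to pull $a$ back into $U$. Your packaging of this via the identity $\pi_A(U)=U\cap A$ is a mild reorganization of the paper's direct verification that $\pi_A(U)\leq U$, not a different argument.
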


\begin{proof}
If $K \leq U \leq AK$, then, of course, $K \leq U \leq \pi_A(U)K$.  And so to prove that $U = \pi_A(U)K$, it suffices to prove that $\pi_A(U) \leq U$.

Let $ab \in U$ with $a \in A$ and $b \in B$.  Since $U \leq AK$, $ab=a'k$ for some $a' \in A$ and some $k \in K$.  So $a' =az$ for some $z \in A \cap B$.  Since $K \leq U$, $a'k{k}^{-1} = a'=az \in U$.  Since $A \cap B \leq U$, $azz^{-1} = a \in U$.  Hence $\pi_A(U) \leq U$.
\end{proof}

We now prove Proposition \ref{prop: nonproper} which states that for a finite group $G=AB$ with $B \leq \textbf{Z}(G)$, we have that $\CD(G) = \CD(A) \cdot \{ B \}$.

\begin{proof}[Proof of Proposition \ref{prop: nonproper}]
We have that $G=AB$ is a central product, and since $B$ is abelian, $\CD(B) = \{ B \}$.   It follows from Theorem \ref{thm:central_product} that $\CD(A) \cdot \{ B \} \subseteq \CD(G)$.

Let $U \in \CD(G)$.  Then $\textbf{Z}(G) \leq U$, and so $B \leq U$, and so by Lemma \ref{lem: pi}, $U = \pi_A(U)B$.  By Proposition \ref{prop: almost}, we have that $\pi_A(U)(A \cap B) = \pi_A(U) \in \CD(A)$ and $B \in \CD(B)=\{B\}$, and so $U = \pi_A(U)B \in \CD(A)\cdot\{B\}$.
\end{proof}

\begin{corollary} \label{cor: G_in_CD}
Suppose a finite group $G=AB$ is a a central product.  Then $G \in \CD(G)$ if and only if $A \in \CD(A)$ and $B \in \CD(B)$.
\end{corollary}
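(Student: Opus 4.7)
The plan is to deduce the corollary directly from Theorem \ref{thm:central_product} together with Lemma \ref{lem: prod}, since both tools are tailor-made for tracking when $T_A$, $T_B$, and $T_G$ coincide with the ambient groups.

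For the easy direction, suppose $A \in \CD(A)$ and $B \in \CD(B)$. Then $G = AB$ is expressible as a product of an element of $\CD(A)$ with an element of $\CD(B)$, so Theorem \ref{thm:central_product} immediately gives $G \in \CD(A) \cdot \CD(B) \subseteq \CD(G)$.

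For the converse, suppose $G \in \CD(G)$. Since $G$ contains every subgroup of itself and $\CD(G)$ has a top element $T_G$, membership of $G$ in $\CD(G)$ forces $T_G = G$. By Theorem \ref{thm:central_product}, $T_G = T_A T_B$ where $T_A \in \CD(A)$ and $T_B \in \CD(B)$ are the tops. Thus $AB = T_A T_B$, and since $A \cap B \leq \textbf{Z}(A) \leq T_A$ and $A \cap B \leq \textbf{Z}(B) \leq T_B$, Lemma \ref{lem: prod} applies and yields $T_A = A$ and $T_B = B$. Hence $A \in \CD(A)$ and $B \in \CD(B)$.

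I do not expect a significant obstacle here; the entire content of the corollary is the pairing of the two observations that (i) $G \in \CD(G)$ is equivalent to $G = T_G$, and (ii) the top of $\CD(G)$ factors as $T_A T_B$ in a way that is sharp enough for Lemma \ref{lem: prod} to recover the individual factors. The only small care needed is verifying the containment hypotheses of Lemma \ref{lem: prod}, which follow from the fact that every subgroup in a Chermak--Delgado lattice contains the center of its ambient group.
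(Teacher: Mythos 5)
Your proof is correct and follows essentially the same route as the paper: the forward direction is an immediate application of Theorem \ref{thm:central_product}, and the converse identifies $G$ with the top $T_AT_B$ of $\CD(G)$ and then invokes Lemma \ref{lem: prod} (with the containment hypotheses supplied by $A \cap B \leq \textbf{Z}(A) \leq T_A$ and $A \cap B \leq \textbf{Z}(B) \leq T_B$) to conclude $T_A = A$ and $T_B = B$. No gaps.
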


\begin{proof}
If $A \in \CD(A)$ and $B \in \CD(B)$, then by Theorem \ref{thm:central_product}, $AB=G \in \CD(G)$.

Conversely, suppose $G \in \CD(G)$.  Then by Theorem \ref{thm:central_product}, we have $T_AT_B = G \in \CD(G)$, where $T_A$ is the top element of $\CD(A)$ and $T_B$ is the top element of $\CD(B)$.  Now $A \cap B \leq Z(A) \leq T_A$, and $A \cap B \leq Z(B) \leq T_B$.  Since $T_AT_B = AB$, it follows from Lemma \ref{lem: prod} that $T_A = A$ and $T_B = B$.
\end{proof}

We say that a central product $G=AB$ is proper if $\textbf{Z}(G) < A < G$ and $\textbf{Z}(G) < B < G$, and in such case we say that the group $G$ admits a proper central product.

Note that if a group $G=AB$ is a central product with one of $A$ or $B$ abelian, say $B$, then $B \leq \textbf{Z}(G)$, and we are in the situation of Proposition \ref{prop: nonproper}.  

Given a finite group $G$ which admits a proper central product, one wonders if $\CD(G)$ is equal to the subgroup collection of $G$ that is generated by all $\CD(X) \cdot \CD(Y)$ where $G=XY$ a proper central product.  The answer is ``no'' in general.

\begin{proposition} \label{prop: antichain}
There exists a finite group $G$ with $G \in \CD(G)$ so that $\CD(G)$ has height $2$ and $\CD(G)$ possesses both abelian and nonabelian subgroups of height $1$.
\end{proposition}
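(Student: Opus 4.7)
The plan is to exhibit an explicit finite group $G$ witnessing the three conditions and to verify them by direct computation in its Chermak--Delgado lattice. A structural analysis of what such a $G$ must look like guides the search: if $K\in \CD(G)$ is abelian at height $1$, then $K=\textbf{C}_G(K)$, since otherwise the centralizer is a strictly larger $\CD$-element lying in the lattice, forcing height at least $2$ above $\textbf{Z}(G)$; hence $|K|^2=m^*(G)$, so $|G|\cdot|\textbf{Z}(G)|$ must be a perfect square. Similarly, if $H\in \CD(G)$ is nonabelian at height $1$, then $\textbf{Z}(H)=\textbf{Z}(G)$ (the center of $H$ already lies in $\CD(G)$ between the bottom and $H$) and $H\,\textbf{C}_G(H)=G$ is a central product decomposition with $|H|=|K|$. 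I would therefore target a finite $p$-group of moderate order, for concreteness a $2$-group of order $32$ or $64$, constructed so that a self-centralizing abelian subgroup $K$ and a nonabelian subgroup $H$ of the required common order coexist inside $G$, while $G$ itself realizes the maximum Chermak--Delgado measure.

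Given such a candidate $G$, I would proceed in four steps. First, compute $\textbf{Z}(G)$, verify $m^*(G)=|G|\cdot|\textbf{Z}(G)|$, and confirm $G\in \CD(G)$. Second, exhibit the abelian $K$ and check $K=\textbf{C}_G(K)$, so that $K\in \CD(G)$. Third, exhibit the nonabelian $H$, check that $\textbf{Z}(H)=\textbf{Z}(G)$, $H\cap\textbf{C}_G(H)=\textbf{Z}(G)$, and $H\,\textbf{C}_G(H)=G$, from which $m_G(H)=m^*(G)$ and $H\in \CD(G)$ follow. Fourth, verify that neither $K$ nor $H$ has a proper $\CD$-subgroup strictly between itself and $\textbf{Z}(G)$: for each $X$ with $\textbf{Z}(G)<X<K$, compute $|\textbf{C}_G(X)|$ and check $m_G(X)<m^*(G)$, and carry out the analogous check for $X<H$ using the bijection between subgroups of $H$ containing $\textbf{Z}(H)$ and subgroups of the abelian quotient $H/\textbf{Z}(H)$. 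A final enumeration of the remaining subgroups of $G$ containing $\textbf{Z}(G)$ then confirms that no element of $\CD(G)$ lies strictly above a height-$1$ element, so $\CD(G)$ has height exactly $2$.

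The main obstacle is the fourth step: ruling out extraneous $\CD$-elements at intermediate heights. This is a case-by-case verification over the part of the subgroup lattice of $G$ sitting above $\textbf{Z}(G)$, and the art lies in choosing $G$ so that this lattice is small enough to make the check tractable, or so that the symmetries of $G$ collapse many cases at once. For a well-chosen concrete $G$ the verification is routine, and can be confirmed by a computer algebra system; the value of the construction is in demonstrating that the apparently strong combination of properties in the proposition is in fact compatible.
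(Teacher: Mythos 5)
There is a genuine gap: you never actually produce the group. Your text is a search heuristic plus a verification checklist --- ``target a $2$-group of order $32$ or $64$, constructed so that \dots'' --- but the proposition is a pure existence statement, and without a concrete witness (or at least a fully specified family whose properties are then verified) nothing has been proved. The paper itself does not construct the group either; it discharges the obligation by citing an explicit construction, namely \cite[Proposition 10]{Bre_14_2}, which is the entire content of the result. Your four-step verification plan is sensible and your structural analysis is essentially correct (an abelian atom $K$ must satisfy $K=\textbf{C}_G(K)$, and a nonabelian atom $H$ forces the proper central product $G=H\textbf{C}_G(H)$, which is exactly how the paper exploits this proposition later, in Proposition \ref{prop: bad_antichain}), but analysis of what a witness must look like is not the same as exhibiting one.

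A secondary concern is that your proposed search range is probably too optimistic, which matters because it suggests the missing construction is not routine. Your own constraints give $|G|=|H|^2/|\textbf{Z}(G)|$ with $H$ nonabelian and $\textbf{Z}(H)=\textbf{Z}(G)$; with $|\textbf{Z}(G)|=2$ and $|H|=8$ this forces $|G|=32$ and $G$ a central product of two nonabelian groups of order $8$ over a common center, i.e.\ an extraspecial group of order $32$ --- but for such groups the Chermak--Delgado lattice has height $4$, not $2$ (the paper makes the analogous observation for $Q_8 * Q_8$ at the end of Section \ref{sec: main}). So the first natural candidates fail, and one must work harder; this is precisely why the cited explicit construction is the substance of the proposition. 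To repair your proof, either reproduce a concrete group with the verification you outline, or cite a source that does.
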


See \cite[Proposition 10]{Bre_14_2} for an explicit construction.

\begin{proposition} \label{prop: bad_antichain}
There exists a finite group $G$ that admits a proper central product, so that for $\mathcal{C}$ the subgroup collection of $G$ generated by all $\CD(X) \cdot \CD(Y)$ where $G=XY$ is a proper central product, we have that $\mathcal{C}$ is not equal to $\CD(G)$.
\end{proposition}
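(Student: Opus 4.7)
The plan is to take $G$ to be the group from Proposition \ref{prop: antichain} (explicitly realized in \cite[Proposition 10]{Bre_14_2}), so that $G \in \CD(G)$, $\CD(G)$ has height $2$, and $\CD(G)$ contains an abelian subgroup $M$ at height $1$. I will show that for every proper central product $G = XY$, the product $\CD(X) \cdot \CD(Y)$ contains only four subgroups and that both of its height-$1$ members are nonabelian; consequently $M \notin \CD(X) \cdot \CD(Y)$ for any such factorization, so $M \notin \mathcal{C}$ and $\mathcal{C} \neq \CD(G)$.

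Suppose $G = XY$ is proper, so $\textbf{Z}(G) < X < G$ and $\textbf{Z}(G) < Y < G$. First, neither $X$ nor $Y$ can be abelian: if $X$ were, then $X$ commutes with itself and with $Y$ (since $[X,Y] = 1$), hence with all of $G = XY$, yielding $X \leq \textbf{Z}(G)$ and contradicting $\textbf{Z}(G) < X$. Since $G \in \CD(G)$, Corollary \ref{cor: G_in_CD} gives $X \in \CD(X)$ and $Y \in \CD(Y)$, so $X$ and $Y$ are the top elements of their respective Chermak--Delgado lattices. The nonabelian-ness of $X,Y$ then forces $B_X < X$ and $B_Y < Y$, so each of $\CD(X), \CD(Y)$ has height at least $1$; by Theorem \ref{thm: levels}, these heights sum to the height of $\CD(G)$, namely $2$. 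Hence both heights equal $1$, and $\CD(X) = \{B_X, X\}$, $\CD(Y) = \{B_Y, Y\}$.

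Therefore $\CD(X) \cdot \CD(Y) = \{B_X B_Y,\, B_X Y,\, X B_Y,\, XY\}$, with middle elements $B_X Y$ and $X B_Y$; each contains one of the nonabelian subgroups $Y$ or $X$, so both middle elements are nonabelian. Since $M$ is abelian and lies strictly between the top and bottom of $\CD(G)$, it cannot coincide with any of these four subgroups, for any proper central product $G = XY$. This gives $M \notin \mathcal{C}$, while $M \in \CD(G)$, so $\mathcal{C} \neq \CD(G)$. If $\mathcal{C}$ is instead understood as the sublattice of $\CD(G)$ generated by these products, the conclusion still holds: in the modular height-$2$ lattice $\CD(G)$, joins of distinct height-$1$ elements are the top and meets of distinct ones are the bottom, so no new height-$1$ subgroups---in particular no new abelian ones---are produced.

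The main obstacle is verifying that the specific group of Proposition \ref{prop: antichain} does admit a proper central product; this requires inspecting the construction in \cite[Proposition 10]{Bre_14_2}. Should that construction fail this extra requirement, I would replace it by a suitable variant that preserves the height-$2$, mixed-type $\CD$-lattice while introducing a proper central factorization.
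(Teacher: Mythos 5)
Your proof is correct and follows essentially the same route as the paper: take the group of Proposition \ref{prop: antichain}, use Corollary \ref{cor: G_in_CD} together with the height count (via Theorem \ref{thm: levels}) to force $\CD(X)=\{\textbf{Z}(X),X\}$ and $\CD(Y)=\{\textbf{Z}(Y),Y\}$ for every proper central product $G=XY$, and conclude that the abelian atom of $\CD(G)$ is never captured. The one step you defer --- that this $G$ actually admits a proper central product --- does not require inspecting the construction in \cite[Proposition 10]{Bre_14_2}: Proposition \ref{prop: antichain} supplies a nonabelian atom $H$, and the (unnumbered) lemma preceding Lemma \ref{lem: abel_atoms} shows that $G=H\textbf{C}_G(H)$ is a proper central product, which is exactly how the paper opens its proof. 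Your extra care with the word ``generated'' (checking that joins and meets in the height-$2$ modular lattice produce no new abelian height-$1$ elements) addresses a point the paper passes over silently.
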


\begin{proof}
Let $G$ be a group as in Proposition \ref{prop: antichain}.  Then there exists $H \in \CD(G)$ nonabelian of height $1$, and so $\textbf{C}_G(H) \in \CD(G)$ is nonabelian of height $1$, and so the central product $G = H \textbf{C}_G(H)$ is proper.

If $G=AB$ is an arbitrary proper central product, then by Corollary \ref{cor: G_in_CD}, we have $A \in \CD(A)$ and $B \in \CD(B)$.  By Theorem \ref{thm:central_product} we have that $\CD(A) \cdot \CD(B) \subseteq \CD(G)$, and since $\CD(G)$ has height $2$, it follows that $\CD(A) = \{A, \textbf{Z}(A)\}$ and $\CD(B) = \{B,\textbf{Z}(B)\}$ both have height $1$.  Thus the only abelian subgroup in $\CD(A) \cdot \CD(B)$ is $\textbf{Z}(A)\textbf{Z}(B)=\textbf{Z}(G)$.  And so if $\mathcal{C}$ is the subgroup collection of $G$ generated by all $\CD(X) \cdot \CD(Y)$ where $G=XY$ is a proper central product, then the only abelian subgroup in $\mathcal{C}$ is $\textbf{Z}(G)$.  Thus $\mathcal{C}$ is not equal to $\CD(G)$.
\end{proof}

We continue with an application of Theorem \ref{thm:central_product}.

\begin{proposition} \label{prop: small_CD_lattice}
Suppose that $G$ is a finite group and suppose that $\CD(G) = \{\textbf{Z}(G), G \}$.  Then $G$ admits no proper central product.
\end{proposition}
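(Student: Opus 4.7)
The plan is to argue by contradiction: suppose $G = AB$ is a proper central product, so that $\textbf{Z}(G) < A < G$ and $\textbf{Z}(G) < B < G$. First I would observe that since $G$ itself is the top element of $\CD(G) = \{\textbf{Z}(G), G\}$, Corollary \ref{cor: G_in_CD} immediately yields $A \in \CD(A)$ and $B \in \CD(B)$, so that the tops satisfy $T_A = A$ and $T_B = B$.

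Next I would feed this into Theorem \ref{thm:central_product}, which forces $\CD(A)\cdot \CD(B) \subseteq \CD(G) = \{\textbf{Z}(G), G\}$ and, crucially, identifies the bottom $B_G = \textbf{Z}(G)$ with $B_A \cdot B_B$. The decisive step is to consider the mixed product $A\cdot B_B \in \CD(G)$, where $B_B$ is the bottom of $\CD(B)$. Since $A\leq A\cdot B_B$ and $A > \textbf{Z}(G)$, the only option is $A\cdot B_B = G = AB$. The equality clause of Lemma \ref{lem: prod} then forces $B_B = B$. Combining this with $\textbf{Z}(G) = B_A \cdot B_B$ yields $B \leq B_A\cdot B = \textbf{Z}(G)$, contradicting $B > \textbf{Z}(G)$.

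The only delicate bookkeeping point, which I would not expect to be a genuine obstacle, is verifying the hypotheses of Lemma \ref{lem: prod} when deducing $B_B = B$: one needs each of $A$, $A$, $B_B$, $B$ to contain $A\cap B$. For $A$ and $B$ this is trivial, and for $B_B$ it follows from the standard fact that every subgroup in a Chermak--Delgado lattice contains the center, together with $A\cap B \leq \textbf{Z}(B)$, which holds in any central product. Once that is in place, the proposition is essentially a one-line consequence of Corollary \ref{cor: G_in_CD}, Theorem \ref{thm:central_product}, and Lemma \ref{lem: prod}, with no further technical difficulty.
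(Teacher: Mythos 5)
Your argument is correct, and it shares the paper's two main pillars---Corollary~\ref{cor: G_in_CD} to get $A \in \CD(A)$ and $B \in \CD(B)$, and Theorem~\ref{thm:central_product} to place products of Chermak--Delgado subgroups inside $\CD(G)$---but the decisive step is genuinely different. The paper argues via heights: since the sum of the heights of $\CD(A)$ and $\CD(B)$ cannot exceed the height of $\CD(G)$, which is $1$, one of the two lattices collapses to a single point, forcing that factor to be abelian and hence central, so the product is not proper. You instead use the identification of bottoms from Theorem~\ref{thm:central_product}, namely $B_AB_B = \textbf{Z}(G)$, together with the equality clause of Lemma~\ref{lem: prod} applied to $AB_B = G = AB$ (the equality $AB_B=G$ being forced because $AB_B \in \CD(G)$ properly contains $\textbf{Z}(G)$), to conclude $B_B = B$ and hence $B \leq B_AB_B = \textbf{Z}(G)$, a contradiction; and you correctly verify the hypothesis $A \cap B \leq B_B$ via $A \cap B \leq \textbf{Z}(B) \leq B_B$. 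Your route trades the height bookkeeping---which in the paper is itself justified by a chain construction resting on Lemma~\ref{lem: prod}---for a single direct application of that lemma, so it is arguably more self-contained; the paper's version has the small advantage of exhibiting explicitly which factor is abelian and of generalizing immediately to the height-$2$ and height-$3$ analysis in Proposition~\ref{prop: heights_2_3}.
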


\begin{proof}
Note that if $G=\textbf{Z}(G)$, then $G$ is abelian and the result is true.  Suppose $G=AB$ is an arbitrary central product.  We will show that one of $A$ or $B$ is abelian, and thus $G$ admits no proper central product.

By Corollary \ref{cor: G_in_CD}, we have $A \in \CD(A)$ and $B \in \CD(B)$.  By Theorem \ref{thm:central_product} we have that $\CD(A) \cdot \CD(B) \subseteq \CD(G)$, and since $\CD(G)$ has height $1$, it follows that one of $\CD(A)$ or $\CD(B)$ has height $0$, say $\CD(A)$.  So $\CD(A) = \{ A \}$, and thus $A$ is abelian.
\end{proof}

We say that a subgroup $A \in \CD(G)$ is an atom if the height of $A \in \CD(G)$ is $1$, and we say that a subgroup $B \in \CD(G)$ is a coatom if the depth of $B \in \CD(G)$ is $1$.

\begin{lemma}
Suppose that $G$ is a finite group with $G\in \CD(G)$ and suppose that $\CD(G)$ has height greater than $1$. If $A$ is a nonabelian atom in $\CD(G)$, then $A \textbf{C}_G(A)$ is a proper central product. 
\end{lemma}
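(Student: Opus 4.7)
The plan is to combine the self-duality of $\CD(G)$---implemented by the order-reversing involution $X \mapsto \textbf{C}_G(X)$---with the hypothesis that $A$ is a nonabelian atom, to force the join of $A$ and $\textbf{C}_G(A)$ in $\CD(G)$ to equal all of $G$, and then to verify the four strict containments required for a proper central product decomposition.

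First I would record setup facts. Since $G \in \CD(G)$ and $G$ is the maximum subgroup, the top element is $T_G = G$, and hence the bottom element is $B_G = \textbf{C}_G(T_G) = \textbf{Z}(G)$. Because $X \mapsto \textbf{C}_G(X)$ restricts to an order-reversing involution on $\CD(G)$, it exchanges heights and depths. In particular, $A$ being an atom (height $1$) forces $\textbf{C}_G(A)$ to be a coatom (depth $1$), so nothing of $\CD(G)$ lies strictly between $\textbf{C}_G(A)$ and $G$.

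Next I would examine the join $A \vee \textbf{C}_G(A)$ in $\CD(G)$, which by the standard property of CD elements equals the product $A\textbf{C}_G(A)$. This subgroup sits above $\textbf{C}_G(A)$ in $\CD(G)$, so the coatom observation forces it to be either $\textbf{C}_G(A)$ or $G$. The former would imply $A \leq \textbf{C}_G(A)$, i.e.\ $A$ abelian, contradicting the hypothesis; hence $G = A\textbf{C}_G(A)$. Since $[A,\textbf{C}_G(A)] = 1$ by definition of the centralizer, this exhibits $G$ as a central product of $A$ and $\textbf{C}_G(A)$.

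Finally, properness amounts to the four strict inequalities $\textbf{Z}(G) < A < G$ and $\textbf{Z}(G) < \textbf{C}_G(A) < G$. The first pair follows because $A$ has height $1$ in $\CD(G)$ while $B_G = \textbf{Z}(G)$ has height $0$, and because $A$ has height $1$ while $T_G = G$ has height exceeding $1$ by hypothesis; the second pair follows dually from $\textbf{C}_G(A)$ having depth $1$ and $B_G$ having depth greater than $1$. I do not expect a real obstacle here: the only slightly delicate step is translating atomicity of $A$ into coatomicity of $\textbf{C}_G(A)$, which is exactly where the self-duality of $\CD(G)$ carries the argument.
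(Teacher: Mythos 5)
Your proof is correct and takes essentially the same route as the paper's: self-duality of $\CD(G)$ makes $\textbf{C}_G(A)$ a coatom, nonabelianness of $A$ prevents the join $A\textbf{C}_G(A) \in \CD(G)$ from collapsing onto $\textbf{C}_G(A)$, so it must equal $G$. The only cosmetic difference is in certifying $\textbf{Z}(G) < \textbf{C}_G(A)$: you use a depth count, while the paper shows $\textbf{C}_G(A)$ is nonabelian via the double-centralizer identity $\textbf{C}_G(\textbf{C}_G(A)) = A$; both are valid.
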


\begin{proof}
We note that $\textbf{C}_G(A)$ is a coatom of $\CD(G)$. Since $A$ is nonabelian it is not a subgroup of $\textbf{C}_G(A)$. Since $\textbf{C}_G(A)$ is a coatom, the only element of $\CD(G)$ properly containing it is $G$. Thus $A \textbf{C}_G(A)$ is all of $G$. Moreover, $\textbf{C}_G(A)$ is not abelian, else it would be contained in $C_G(C_G(A)) = A$, which would imply it is $A$ since $A$ is an atom, but this implies that $A = \textbf{C}_G(A)$ which implies that $A$ is abelian. So the central product $G= A \textbf{C}_G(A)$ is proper. 
\end{proof}

\begin{lemma} \label{lem: abel_atoms}
Suppose that $G$ is a finite group with $G \in \CD(G)$ and suppose that $\CD(G)$ has height greater than $1$.  If $G$ admits no proper central product, then every atom in $\CD(G)$ is abelian.
\end{lemma}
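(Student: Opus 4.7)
The plan is to argue by contraposition: assume some atom $A \in \CD(G)$ is nonabelian, and deduce that $G$ then admits a proper central product, contradicting the hypothesis.

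The work is essentially already done by the preceding lemma, whose hypotheses (finite group $G$ with $G \in \CD(G)$ and $\CD(G)$ of height greater than $1$) are identical to those of the present statement. So the first step is just to verify that a putative nonabelian atom $A$ in $\CD(G)$ triggers that lemma. Applying it produces $G = A\textbf{C}_G(A)$ as a proper central product of $A$ and $\textbf{C}_G(A)$, which directly contradicts the assumption that $G$ admits no proper central product. Hence no nonabelian atom can exist in $\CD(G)$, and every atom is abelian.

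There is essentially no obstacle here — this lemma is a one-line corollary of the previous one via contrapositive, and it is being recorded separately only because it is the form in which the result will be applied downstream. The only thing worth being careful about is the logical packaging: we must begin with an atom $A$ of $\CD(G)$ (not just any nonabelian subgroup), so that the previous lemma applies and yields a \emph{proper} central product, not merely a central product. Given our explicit hypothesis that $\CD(G)$ has height greater than $1$, atoms exist and are distinct from $G$ itself, so this is not an issue.
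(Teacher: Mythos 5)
Your proposal is correct and matches the paper's own proof exactly: the paper likewise argues that a nonabelian atom $A$ would make $G = A\textbf{C}_G(A)$ a proper central product by the preceding lemma, contradicting the hypothesis. Nothing is missing.
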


\begin{proof}
If some atom $A \in \CD(G)$ is nonabelian, then $G=A\textbf{C}_G(A)$ is a proper central product.
\end{proof}

\begin{lemma}
Suppose that $G$ is a finite group with $G \in \CD(G)$ and suppose that $\CD(G)$ has height greater than $1$.  If $C$ is a coatom in $\CD(G)$, then either $\textbf{C}_G(C) = \textbf{Z}(C)$, or $C \textbf{C}_G(C)$ is a proper central product. 
\end{lemma}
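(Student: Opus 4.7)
The plan is to dichotomize on whether $\textbf{C}_G(C) = \textbf{Z}(C)$. If equality holds, there is nothing to prove, so I focus on the complementary case and aim to show that $\textbf{C}_G(C)$ is a \emph{nonabelian} atom of $\CD(G)$. Given this, the preceding lemma, applied with $A = \textbf{C}_G(C)$, would immediately yield that $\textbf{C}_G(C)\cdot\textbf{C}_G(\textbf{C}_G(C))$ is a proper central product; and since the centralizer map is an involution on $\CD(G)$, we have $\textbf{C}_G(\textbf{C}_G(C)) = C$, so this reads $C\textbf{C}_G(C)$ is a proper central product, which is exactly the desired conclusion.

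The two facts I would lean on are standard features of Chermak--Delgado lattices: the centralizer map $H \mapsto \textbf{C}_G(H)$ is an antitone involution on $\CD(G)$ (so that $C$ being a coatom implies $\textbf{C}_G(C)$ is an atom with $\textbf{C}_G(\textbf{C}_G(C)) = C$), and the elementary identity $\textbf{C}_G(C)\cap C = \textbf{Z}(C)$. Assuming $\textbf{C}_G(C) \neq \textbf{Z}(C)$, suppose toward contradiction that $\textbf{C}_G(C)$ is abelian. Then $\textbf{C}_G(C) \leq \textbf{C}_G(\textbf{C}_G(C)) = C$, and combined with the intersection identity this forces $\textbf{C}_G(C) = \textbf{C}_G(C)\cap C = \textbf{Z}(C)$, contradicting the case hypothesis. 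Hence $\textbf{C}_G(C)$ is a nonabelian atom of $\CD(G)$, and the preceding lemma finishes the argument.

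The main obstacle is conceptual rather than computational: one has to notice that the preceding lemma can be applied with $\textbf{C}_G(C)$ playing the role of the nonabelian atom, and that the Chermak--Delgado involution then returns $C$ as its centralizer. Once that reframing is in place, the abelian sub-case collapses in one line, and the proposition reduces to a direct appeal to the previous result.
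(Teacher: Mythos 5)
Your proof is correct: the paper states this lemma without proof, and your argument --- using the antitone centralizer involution on $\CD(G)$ to see that $\textbf{C}_G(C)$ is an atom with $\textbf{C}_G(\textbf{C}_G(C)) = C$, ruling out the abelian case since $\textbf{C}_G(C) \leq C$ would force $\textbf{C}_G(C) = C \cap \textbf{C}_G(C) = \textbf{Z}(C)$, and then invoking the preceding lemma with $A = \textbf{C}_G(C)$ --- is exactly the intended dualization of that preceding lemma. Nothing is missing.
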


We now prove Theorem \ref{thm: levels} which states that for a finite group $G=AB$ a central product, the height of the Chermak--Delgado lattice of $G$ is equal to the sum of the heights of the Cheramk--Delgado lattices of $A$ and $B$ respectively.  Moreover, an element $HK \in \CD(G)$ with $A \cap B \leq H\leq A$ and $A \cap B \leq K\leq B$ has height (resp.\ depth) equal to the sum of the heights (resp.\ depths) of $H\in \CD(A)$ and $K\in\CD(B)$.

\begin{proof}[Proof of Theorem \ref{thm: levels}]
We first prove the assertion in regards to height.  It follows from Proposition \ref{prop: almost} that given $HK \in \CD(G)$ with $A \cap B \leq H\leq A$ and $A \cap B \leq K\leq B$, we have $H\in \CD(A)$ and $K\in\CD(B)$, and it follows from Theorem \ref{thm:central_product} that such $HK \in \CD(G)$ exists, namely the product of the top element of $\CD(A)$ and the top element of $\CD(B)$.

Suppose $HK \in \CD(G)$ with $A \cap B \leq H\leq A$ and $A \cap B \leq K\leq B$, and suppose $H\in \CD(A)$ has height $i$ and $K\in\CD(B)$ has height $j$.  We proceed by induction on $i+j$.  If $i+j=0$, the result is true by Theorem \ref{thm:central_product}, and we have $H=B_A$, $K=B_B$, and $HK = B_AB_B = B_G$.  Suppose $i+j >0$ and suppose without loss of generality that $i > 0$.  Let $H_0 \in \CD(A)$ of height $i-1$ with $H_0 < H$.  Then by induction $H_0K \in \CD(G)$ is of height $i+j-1$.  Suppose $U \in \CD(G)$ with $H_0K < U \leq HK$ is arbitrary.  By Lemma \ref{lem: pi}, $U = \pi_A(U)K$.  By Proposition \ref{prop: almost}, $\pi_A(U)(A \cap B) = \pi_A(U) \in \CD(A)$.  And since $H_0, \pi_A(U),$ and $H$ all contain $A \cap B$, by Lemma \ref{lem: prod} we have that $H_0 < \pi_A(U) \leq H$.  Since $H_0 \in \CD(A)$ is of height $i-1$ and $H \in CD(A)$ is of height $i$, we have that $\pi_A(U) = H$, and so $U=HK$.  Since $U$ was arbitrary, it follows that $HK \in \CD(G)$ is of height $i+j$.

We now establish the assertion in regards to depth.  Let $n_A$, $n_B$, and $n_G$ be the heights of $\CD(A)$, $\CD(B)$, and $\CD(G)$, respectively.  Suppose $H\in \CD(A)$ has height $i$ and $K\in\CD(B)$ has height $j$.  Then we have just shown that $HK \in \CD(G)$ has height $i+j$.  And so $H\in \CD(A)$ has depth $n_A-i$ and $K\in\CD(B)$ has depth $n_B-j$, and $HK \in \CD(G)$ has depth $n_G-(i+j)$.  Now we have established that $n_A + n_B = n_G$, by applying the result to the heights of the top elements of $\CD(A)$, $\CD(B)$, and $\CD(G)$.  Observe that $n_A - i + n_B - j = n_A + n_B - (i+j) = n_G - (i+j)$, and we have the result in regards to depth.
\end{proof}

Our last application of this section relies on both Theorem \ref{thm:central_product} and Theorem \ref{thm: levels}.

\begin{proposition} \label{prop: heights_2_3}
Suppose that $G$ is a finite group with $G \in \CD(G)$ and suppose that $\CD(G)$ has height $2$ or $3$. Then $G$ admits no proper central product if and only if every atom in $\CD(G)$ is abelian.
\end{proposition}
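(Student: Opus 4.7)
The plan is to handle the forward direction by citing Lemma~\ref{lem: abel_atoms} directly, and then to prove the converse by contradiction.

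For the converse, I would assume that every atom of $\CD(G)$ is abelian yet $G = AB$ is a proper central product. The first step is to observe that both $A$ and $B$ must be nonabelian: if, say, $B$ were abelian then $B \leq \mathbf{Z}(G)$ (since $B$ commutes with itself and with $A$), contradicting $\mathbf{Z}(G) < B$. Next, Corollary~\ref{cor: G_in_CD} gives $A \in \CD(A)$ and $B \in \CD(B)$, so $A$ and $B$ are the top elements of their respective Chermak--Delgado lattices; centralizing the tops shows that $\mathbf{Z}(A) \in \CD(A)$ and $\mathbf{Z}(B) \in \CD(B)$ are the bottoms. Since $A$ and $B$ are nonabelian, $\mathbf{Z}(A) \neq A$ and $\mathbf{Z}(B) \neq B$, so both $\CD(A)$ and $\CD(B)$ have height at least $1$.

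I would then invoke Theorem~\ref{thm: levels}: the height of $\CD(G)$ equals the sum of the heights of $\CD(A)$ and $\CD(B)$, so this sum lies in $\{2,3\}$. The arithmetic observation that two positive integers summing to $2$ or $3$ must include a $1$ lets me assume, after swapping $A$ and $B$ if needed, that $\CD(B)$ has height exactly $1$. Set $U := B \cdot \mathbf{Z}(A)$. By Theorem~\ref{thm:central_product}, $U \in \CD(G)$; by Theorem~\ref{thm: levels}, $U$ has height $1 + 0 = 1$ in $\CD(G)$, so it is an atom. But $B \leq U$ and $B$ is nonabelian, so $U$ is a nonabelian atom of $\CD(G)$, contradicting the hypothesis.

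The only real insight is identifying the correct atom $B \cdot \mathbf{Z}(A)$; the rest is bookkeeping with the two main theorems and Corollary~\ref{cor: G_in_CD}. The height bound is used exactly at the arithmetic step forcing one factor to have $\CD$-height precisely $1$, which is why the proposition is confined to heights $2$ and $3$: at height $4$ both $\CD(A)$ and $\CD(B)$ could have height $2$, and this construction would no longer automatically produce a nonabelian atom.
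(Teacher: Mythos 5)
Your proof is correct and follows essentially the same route as the paper: Lemma~\ref{lem: abel_atoms} for one direction, then Corollary~\ref{cor: G_in_CD} plus the height additivity of Theorem~\ref{thm: levels} to force one factor's Chermak--Delgado lattice to have height exactly $1$, producing the same nonabelian atom (the nonabelian factor times the other factor's center). The only cosmetic difference is that you run the converse as a contradiction against a \emph{proper} central product, whereas the paper takes an arbitrary central product and shows one factor must be abelian; these are equivalent framings of the same argument.
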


\begin{proof}
One direction is true by Lemma \ref{lem: abel_atoms}.

Suppose that every atom in $\CD(G)$ is abelian, and let $G=AB$ be an arbitrary central product.  Since $G \in \CD(G)$, by Corollary \ref{cor: G_in_CD} we have that $A \in \CD(A)$ and $B \in \CD(B)$.

Now if one of $\CD(A)$ or $\CD(B)$ has height $0$, say $\CD(A)$, then $\CD(A) = \{A\}$ and so $A$ is abelian, and the result follows.

Otherwise, since $\CD(G)$ has height $2$ or $3$, we have that at least one of $\CD(A)$ or $\CD(B)$ has height $1$, say $\CD(A)$.  So $\CD(A) = \{A, \textbf{Z}(A) \}$.  By Theorem \ref{thm: levels} we have that $A\textbf{Z}(B)$ has height 1 in $\CD(G)$, and thus is a nonabelian atom in $\CD(G)$, a contradiction.
\end{proof}

The fun stops after height $3$.  Construct the central product $G = Q_8 * Q_8$, which is of course proper.  Note that $\CD(G)$ has height $4$, and all of the atoms have order $4$ so they are all abelian.  

\section{Investigating the Chermak--Delgado Lattice via the Central Product.}\label{sec: corollaries}

In this section we show how using the central product and the results from Section \ref{sec: main} we can obtain new proofs of some results about Chermak--Delgado lattice.  

Let $\mathcal{L}(G)$ denote the lattice of all subgroups of a group $G$.  Given a sublattice $\mathcal{C}$ of $\mathcal{L}(G)$, and given $H \leq K \leq G$, we denote the interval in $\mathcal{C}$ between $H$ and $K$ by $[H : K]_{\mathcal{C}} = \{ X \in \mathcal{C} \,\, | \,\, H \leq X \leq K\}$. For example, when $G = Q_8$ the quaternion group of order 8, then $[\langle -1 \rangle\ : G ]_{\mathcal{L}(G)}$ contains 5 groups: $\langle -1 \rangle ,\langle i \rangle, \langle j \rangle, \langle k\rangle,$ and $G$. 

The next lemma originates from \cite[Proposition 1.5]{Bre_12}.

\begin{lemma}\label{lem: interval_of_product}
Let $G$ be a finite group and let $H \in \CD(G)$.  Then $\CD(H\textbf{C}_G(H)) = [\textbf{Z}(H) : H\textbf{C}_G(H)]_{\CD(G)}$.
\end{lemma}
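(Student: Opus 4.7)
The plan is to set $G' := H\textbf{C}_G(H)$ and establish both inclusions of the claimed equality directly. For the setup, I would first observe that since $H \in \CD(G)$, so are $\textbf{C}_G(H)$, their lattice join $G'$, and their meet $H \cap \textbf{C}_G(H) = \textbf{Z}(H)$, because $\CD(G)$ is a sublattice of $\mathcal{L}(G)$ closed under centralizers. A short calculation, using $\textbf{C}_G(\textbf{C}_G(H)) = H$, then gives $\textbf{C}_G(G') = \textbf{Z}(H) = \textbf{Z}(G')$, whence $m_G(G') = |G'|\cdot|\textbf{Z}(H)| = |H|\cdot|\textbf{C}_G(H)| = m^*(G)$, confirming that $G' \in \CD(G)$ with its $G$-centralizer sitting inside $G'$.

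Next I would prove the numerical identity $m^*(G') = m^*(G)$. The inequality $m^*(G') \leq m^*(G)$ is immediate from $\textbf{C}_{G'}(Y) \leq \textbf{C}_G(Y)$ for $Y \leq G'$, and the reverse follows from $m_{G'}(G') = |G'|\cdot|\textbf{Z}(G')| = m^*(G)$. The key structural identity that drives the harder inclusion is $\textbf{C}_G(\textbf{Z}(H)) = G'$: the containment $G' \leq \textbf{C}_G(\textbf{Z}(H))$ is obvious, and for equality I would use that $\textbf{Z}(H) \in \CD(G)$, so $|\textbf{Z}(H)|\cdot|\textbf{C}_G(\textbf{Z}(H))| = m^*(G) = |G'|\cdot|\textbf{Z}(H)|$ forces $|\textbf{C}_G(\textbf{Z}(H))| = |G'|$ (one can also see this as the centralizer involution on $\CD(G)$ swapping $G'$ and $\textbf{Z}(H)$).

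Both inclusions then fall out. If $X \in \CD(G')$, then $\textbf{Z}(H) = \textbf{Z}(G') \leq X \leq G'$, and the chain $m_G(X) \geq m_{G'}(X) = m^*(G') = m^*(G)$ forces $X \in \CD(G)$. Conversely, if $X \in \CD(G)$ lies in $[\textbf{Z}(H) : G']_{\CD(G)}$, then $\textbf{C}_G(X) \in \CD(G)$ is trapped between $\textbf{C}_G(G') = \textbf{Z}(H)$ and $\textbf{C}_G(\textbf{Z}(H)) = G'$; hence $\textbf{C}_G(X) \leq G'$, so $\textbf{C}_{G'}(X) = \textbf{C}_G(X)$ and $m_{G'}(X) = m_G(X) = m^*(G) = m^*(G')$, giving $X \in \CD(G')$. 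The main obstacle will be the identity $\textbf{C}_G(\textbf{Z}(H)) = G'$: without knowing that $\textbf{C}_G(X)$ lands inside $G'$ for every $X$ in the interval, there is no way to identify the $G$- and $G'$-measures in the nontrivial direction.
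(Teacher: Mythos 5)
Your proof is correct and rests on the same key mechanism as the paper's: the identity $\textbf{C}_G(\textbf{Z}(H)) = H\textbf{C}_G(H)$ (which the paper obtains from $H, \textbf{C}_G(H) \in \CD(G)$ and you obtain by the equivalent measure count $|\textbf{Z}(H)|\cdot|\textbf{C}_G(\textbf{Z}(H))| = m^*(G)$), which traps $\textbf{C}_G(X)$ inside $H\textbf{C}_G(H)$ for every $X$ in the interval and so identifies the $G$- and $H\textbf{C}_G(H)$-measures there. The only cosmetic difference is that you split the argument into two inclusions with an explicit $m^*(H\textbf{C}_G(H)) = m^*(G)$ step, whereas the paper proves $\textbf{C}_G(X) = \textbf{C}_{H\textbf{C}_G(H)}(X)$ once for the whole lattice interval and reads off both inclusions at the end.
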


\begin{proof}
First note that $H\textbf{C}_G(H)$ is a central product and so $\textbf{Z}(H\textbf{C}_G(H)) = \textbf{Z}(H)\textbf{Z}(\textbf{C}_G(H))$.  

Now since $H \in \CD(G)$, we have that $\textbf{Z}(\textbf{C}_G(H)) = \textbf{C}_G(\textbf{C}_G(H)) \cap \textbf{C}_G(H) = H \cap \textbf{C}_G(H) = \textbf{Z}(H)$.  And so $\textbf{Z}(H\textbf{C}_G(H)) = \textbf{Z}(H)$.

We establish that for any $X$ with $\textbf{Z}(H) \leq X \leq H\textbf{C}_G(H)$, we have that $\textbf{C}_G(X) = \textbf{C}_{H\textbf{C}_G(H)}(X)$.

Suppose that $H \cap \textbf{C}_G(H) \leq X \leq H\textbf{C}_G(H)$.  Then $\textbf{C}_G(H\textbf{C}_G(H))  \leq \textbf{C}_G(X) \leq \textbf{C}_G(H \cap \textbf{C}_G(H))$, and since $H \in \CD(G)$, we have that $H \cap \textbf{C}_G(H) \leq \textbf{C}_G(X) \leq H\textbf{C}_G(H)$, and hence $\textbf{C}_G(X) = \textbf{C}_{H\textbf{C}_G(H)}(X)$.  

It follows that $X \in \CD(H\textbf{C}_G(H))$ if and only if $X \in [\textbf{Z}(H) : H\textbf{C}_G(H)]_{\CD(G)}$.
\end{proof}

The following corollary appears in An \cite[Theorem 3.4]{An_22} and \cite[Theorem 4.4]{An_22_2}. However, as stated at the start of the section, our goal is to show the power of the central product when investigating the Chermak--Delgado lattice. Of note, our proof utilizes Lemma \ref{lem: interval_of_product} and Proposition \ref{prop: almost} together with some standard results about the Chermak--Delgado lattice.

\begin{corollary}\label{cor: subgroups_in_CD}
Let $G$ be a finite group and let $H \in \CD(G)$.  Then $\CD(H) = [\textbf{Z}(H) : H]_{\CD(G)}$.
\end{corollary}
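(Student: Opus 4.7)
The plan is to work inside the central product $N := H\textbf{C}_G(H)$, using Lemma \ref{lem: interval_of_product} to translate membership in $\CD(N)$ into the interval $[\textbf{Z}(H):N]_{\CD(G)}$, and using Proposition \ref{prop: almost} (with $A = H$, $B = \textbf{C}_G(H)$, and hence $A \cap B = \textbf{Z}(H)$) to transfer between $\CD(N)$ and $\CD(H)$. Since $H \in \CD(G)$, the standard closure properties of $\CD(G)$ give $\textbf{C}_G(H) \in \CD(G)$, $N = H\textbf{C}_G(H) \in \CD(G)$, and $\textbf{Z}(H) = H \cap \textbf{C}_G(H) \in \CD(G)$. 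The ambient group $N$ trivially lies in $\CD(N)$, which is precisely the hypothesis needed to invoke Proposition \ref{prop: almost} on $N = HB$; this already yields $H \in \CD(H)$, $\textbf{C}_G(H) \in \CD(\textbf{C}_G(H))$, and $\CD(H) \cdot \CD(\textbf{C}_G(H)) \subseteq \CD(N)$.

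For the inclusion $\CD(H) \subseteq [\textbf{Z}(H):H]_{\CD(G)}$, I would take $X \in \CD(H)$ and form the product $X\textbf{C}_G(H)$, which lies in $\CD(H) \cdot \CD(\textbf{C}_G(H)) \subseteq \CD(N) \subseteq \CD(G)$. Since $H \in \CD(G)$ and $\CD(G)$ is a lattice, the meet $X\textbf{C}_G(H) \cap H$ is again in $\CD(G)$. A brief set-theoretic verification, exploiting that $\textbf{Z}(H) \leq X$ (because $X \in \CD(H)$), identifies this meet with $X$ itself. Hence $X \in \CD(G)$, and since $\textbf{Z}(H) \leq X \leq H$ is immediate, $X \in [\textbf{Z}(H):H]_{\CD(G)}$.

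For the reverse inclusion, I would take $X \in [\textbf{Z}(H):H]_{\CD(G)}$, so that $X \in \CD(G)$ and $\textbf{Z}(H) \leq X \leq H \leq N$. Lemma \ref{lem: interval_of_product} then places $X \in \CD(N)$. Applying Proposition \ref{prop: almost} to the trivial factorization $X = X \cdot 1$ (with $X \leq H = A$ and $1 \leq B = \textbf{C}_G(H)$) forces $X(A \cap B) = X\textbf{Z}(H) = X \in \CD(A) = \CD(H)$, completing the other containment.

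The main obstacle, modest as it is, lies in the first direction: recognizing that $X\textbf{C}_G(H) \cap H = X$, which is where membership of $X$ in $\CD(H)$ is actually used, via the inclusion $\textbf{Z}(H) \leq X$. Beyond this one routine identification, the proof is simply an assembly of Lemma \ref{lem: interval_of_product} and Proposition \ref{prop: almost} against the central product $N = H\textbf{C}_G(H)$, which is exactly the sort of structural argument the preceding sections were designed to enable.
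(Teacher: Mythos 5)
Your proof is essentially the paper's: both directions rest on Lemma \ref{lem: interval_of_product} and Proposition \ref{prop: almost} applied to the central product $N = H\textbf{C}_G(H)$, and your second direction is exactly the paper's argument. Your first direction is a minor variant --- you push $X$ up to $X\textbf{C}_G(H) \in \CD(N) \subseteq \CD(G)$ and recover $X$ as the meet $X\textbf{C}_G(H) \cap H$ via Dedekind's law, where the paper instead shows $\CD(H) \subseteq \CD(N)$ directly using $\textbf{Z}(H) \in \CD(\textbf{C}_G(H))$ --- but it uses the same tools and is correct.

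One caution: the assertion that ``the ambient group $N$ trivially lies in $\CD(N)$'' is false as a general principle; a group need not belong to its own Chermak--Delgado lattice (the paper points out $A_4 \notin \CD(A_4)$ immediately after this corollary). It does hold here, but only because $N = H\textbf{C}_G(H) \in \CD(G)$ and Lemma \ref{lem: interval_of_product} gives $N \in [\textbf{Z}(H):N]_{\CD(G)} = \CD(N)$. State that justification explicitly rather than calling it trivial; with that one-line repair your application of Proposition \ref{prop: almost} (yielding $H \in \CD(H)$, $\textbf{C}_G(H) \in \CD(\textbf{C}_G(H))$, and $\CD(H)\cdot\CD(\textbf{C}_G(H)) \subseteq \CD(N)$) is legitimate and the argument is complete.
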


\begin{proof}
By Lemma \ref{lem: interval_of_product}, $\CD(H\textbf{C}_G(H)) = [\textbf{Z}(H) : H\textbf{C}_G(H)]_{\CD(G)}$.  

So for any $X \in [\textbf{Z}(H) : H]_{\CD(G)}$, $X \in \CD(H\textbf{C}_G(H))$.  Applying Proposition \ref{prop: almost} with the central product $H\textbf{C}_G(H)$, we have that $X(H \cap \textbf{C}_G(H)) = X\textbf{Z}(H) = X \in \CD(H)$.  Thus $[\textbf{Z}(H) : H]_{\CD(G)} \subseteq \CD(H)$. 

To see the reverse containment, note that every subgroup of $\CD(H)$ contains $\textbf{Z}(H)$, and every subgroup of $\CD(\textbf{C}_G(H))$ contains $\textbf{Z}(\textbf{C}_G(H)) = \textbf{C}_G(\textbf{C}_G(H)) \cap \textbf{C}_G(H) = H \cap \textbf{C}_G(H) = \textbf{Z}(H)$.  By Proposition \ref{prop: almost}, $\CD(H)\cdot \CD(\textbf{C}_G(H)) \subseteq \CD(H\textbf{C}_G(H))$, and since $\textbf{Z}(H) \in \CD(H\textbf{C}_G(H))$, it follows that $\textbf{Z}(H) \in \CD(H)$ and $\textbf{Z}(H) \in \CD(\textbf{C}_G(H))$.  And so $\CD(H) \subseteq \CD(H\textbf{C}_G(H))$.  And since $\CD(H\textbf{C}_G(H)) = [\textbf{Z}(H) : H\textbf{C}_G(H)]_{\CD(G)}$, we have that $\CD(H) \subseteq [\textbf{Z}(H) : H]_{\CD(G)}$.
\end{proof}

Of note, Corollary \ref{cor: subgroups_in_CD} can be used to argue that certain groups never appear in a Chermak--Delgado lattice for any group $G$.  For example, the alternating group $A_4$ can never be in a Chermak--Delgado lattice since $A_4 \notin \CD(A_4)$. 

Taking the central product approach also allows us to prove a result of Tărnăuceanu \cite[Corollary 4]{Tar_18}. Tărnăuceanu's result occurs as a corollary to their classification of groups $G$ satisfying $\CD(G) = [\textbf{Z}(G) : G]_{\mathcal{L}(G)}$. Once again, we are able to present a central product based proof. 

\begin{corollary}\label{cor: full_transitive}
If $G$ is a finite group satisfying $\CD(G) = [\textbf{Z}(G) : G]_{\mathcal{L}(G)}$ and $H$ is a subgroup of $G$, then $\CD(H) = [\textbf{Z}(H) : H]_{\mathcal{L}(G)}$.
\end{corollary}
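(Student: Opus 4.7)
The plan is to reduce to the case where the subgroup contains $\textbf{Z}(G)$, since only then does Corollary \ref{cor: subgroups_in_CD} apply directly. For a general subgroup $H \leq G$, the natural move is to enlarge it: set $K = H\textbf{Z}(G)$. Then $K$ contains $\textbf{Z}(G)$, so by hypothesis $K \in \CD(G)$, and Corollary \ref{cor: subgroups_in_CD} gives $\CD(K) = [\textbf{Z}(K):K]_{\CD(G)}$. Since every element of this interval already contains $\textbf{Z}(G)$, the hypothesis upgrades this to $\CD(K) = [\textbf{Z}(K):K]_{\mathcal{L}(G)}$.

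Next I would identify the two sides of this equality concretely. A short calculation using $\textbf{C}_G(K) = \textbf{C}_G(H) \cap \textbf{C}_G(\textbf{Z}(G)) = \textbf{C}_G(H)$ gives $\textbf{Z}(K) = \textbf{C}_G(H) \cap H\textbf{Z}(G) = \textbf{Z}(H)\textbf{Z}(G)$. On the other hand, $K = H\textbf{Z}(G)$ is a central product with the abelian factor $\textbf{Z}(G)$, so Proposition \ref{prop: nonproper} yields $\CD(K) = \CD(H) \cdot \{\textbf{Z}(G)\}$. Combining these identifications,
\[
\{\, X\textbf{Z}(G) : X \in \CD(H)\,\} = [\,\textbf{Z}(H)\textbf{Z}(G) : H\textbf{Z}(G)\,]_{\mathcal{L}(G)}.
\]

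The main (and really only) obstacle is then to strip off the factor $\textbf{Z}(G)$ on both sides to recover the desired equality $\CD(H) = [\textbf{Z}(H):H]_{\mathcal{L}(G)}$. This is exactly what Lemma \ref{lem: prod} is designed for, applied to the central product $K = H \cdot \textbf{Z}(G)$. For the forward containment, any $X \in \CD(H)$ satisfies $\textbf{Z}(H) \leq X \leq H$ immediately from properties of $\CD(H)$. For the reverse, given $Y$ with $\textbf{Z}(H) \leq Y \leq H$, one has $Y\textbf{Z}(G)$ lying in the right-hand interval above, hence $Y\textbf{Z}(G) = X\textbf{Z}(G)$ for some $X \in \CD(H)$; since $X,Y$ both contain $\textbf{Z}(H) \supseteq H \cap \textbf{Z}(G)$, Lemma \ref{lem: prod} forces $Y = X \in \CD(H)$. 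This closes the argument.
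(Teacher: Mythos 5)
Your argument is correct, and it takes a genuinely different route from the paper's. Both proofs share the same opening move --- enlarge $H$ to a subgroup containing $\textbf{Z}(G)$ so that the hypothesis and Corollary \ref{cor: subgroups_in_CD} apply --- but you enlarge to $K = H\textbf{Z}(G)$ and descend to $\CD(H)$ via Proposition \ref{prop: nonproper} together with the cancellation Lemma \ref{lem: prod}, whereas the paper enlarges to $H\textbf{C}_G(H)$ and descends in one step via Proposition \ref{prop: almost}: given $X$ with $\textbf{Z}(H) \leq X \leq H$, the subgroup $X\textbf{C}_G(H)$ contains $\textbf{Z}(G)$, hence lies in $\CD(H\textbf{C}_G(H))$ by Corollary \ref{cor: subgroups_in_CD}, and Proposition \ref{prop: almost} immediately returns $X(H \cap \textbf{C}_G(H)) = X\textbf{Z}(H) = X \in \CD(H)$. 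The paper's choice of central factor makes the reverse containment a two-line deduction with no need to identify $\CD(K)$ as a whole or to cancel a common factor; your choice of the smaller factor $\textbf{Z}(G)$ buys a cleaner structural statement along the way ($\CD(H\textbf{Z}(G)) = \CD(H)\cdot\{\textbf{Z}(G)\}$, with $\textbf{Z}(H\textbf{Z}(G)) = \textbf{Z}(H)\textbf{Z}(G)$ computed explicitly), at the cost of the extra stripping step, which you correctly justify by checking that $X$ and $Y$ both contain $H \cap \textbf{Z}(G)$ so that Lemma \ref{lem: prod} applies. All the intermediate claims you make --- $K \in \CD(G)$, the upgrade of $[\textbf{Z}(K):K]_{\CD(G)}$ to $[\textbf{Z}(K):K]_{\mathcal{L}(G)}$, and the computation of $\textbf{Z}(K)$ --- check out.
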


\begin{proof}
By definition $\CD(H) \subseteq [\textbf{Z}(H) : H]_{\mathcal{L}(G)}$. For the reverse containment, suppose $X \in [\textbf{Z}(H) : H]_{\mathcal{L}(G)}$. 

Since $\textbf{Z}(G) \leq \textbf{C}_G(H)$, we have that $H\textbf{C}_G(H)\in \CD(G)$.  It follows from Corollary \ref{cor: subgroups_in_CD} that $\CD(H\textbf{C}_G(H)) = [\textbf{Z}(H\textbf{C}_G(H)) : H\textbf{C}_G(H)]_{\mathcal{L}(G)}=[\textbf{Z}(H)\textbf{Z}(\textbf{C}_G(H)) : H\textbf{C}_G(H)]_{\mathcal{L}(G)}$.

And so $X\textbf{C}_G(H) \in \CD(H\textbf{C}_G(H))$, and it follows by Proposition \ref{prop: almost} that $X(H \cap \textbf{C}_G(H)) = X\textbf{Z}(H) = X \in \CD(H)$.
\end{proof}


\bibliographystyle{plainnat}
\bibliography{ref}

\begin{thebibliography}{16}
\providecommand{\natexlab}[1]{#1}
\providecommand{\url}[1]{\texttt{#1}}
\expandafter\ifx\csname urlstyle\endcsname\relax
  \providecommand{\doi}[1]{doi: #1}\else
  \providecommand{\doi}{doi: \begingroup \urlstyle{rm}\Url}\fi

\bibitem[An(2022{\natexlab{a}})]{An_22}
Lijian An.
\newblock Groups whose {Chermak-Delgado} lattice is a subgroup lattice of an
  elementary abelian $p$-group.
\newblock \emph{Communications in Algebra}, 50\penalty0 (7):\penalty0
  2846--2853, 2022{\natexlab{a}}.

\bibitem[An(2022{\natexlab{b}})]{An_22_2}
Lijian An.
\newblock Twisted centrally large subgroups of finite groups.
\newblock \emph{Journal of Algebra}, 604:\penalty0 87--106, 2022{\natexlab{b}}.

\bibitem[An et~al.(2015)An, Brennan, Qu, and Wilcox]{An_15}
Lijian An, Joseph~Phillip Brennan, Haipeng Qu, and Elizabeth Wilcox.
\newblock {Chermak--Delgado} lattice extension theorems.
\newblock \emph{Communications in Algebra}, 43\penalty0 (5):\penalty0
  2201--2213, 2015.

\bibitem[Brewster and Wilcox(2012)]{Bre_12}
Ben Brewster and Elizabeth Wilcox.
\newblock Some groups with computable {Chermak--Delgado} lattices.
\newblock \emph{Bulletin of the Australian Mathematical Society}, 86\penalty0
  (1):\penalty0 29--40, 2012.

\bibitem[Brewster et~al.(2014{\natexlab{a}})Brewster, Hauck, and
  Wilcox]{Bre_14}
Ben Brewster, Peter Hauck, and Elizabeth Wilcox.
\newblock Groups whose {Chermak--Delgado} lattice is a chain.
\newblock \emph{Journal of Group Theory}, 17\penalty0 (2):\penalty0 253--279,
  2014{\natexlab{a}}.

\bibitem[Brewster et~al.(2014{\natexlab{b}})Brewster, Hauck, and
  Wilcox]{Bre_14_2}
Ben Brewster, Peter Hauck, and Elizabeth Wilcox.
\newblock Quasi-antichain {Chermak--Delgado} lattices of finite groups.
\newblock \emph{Archiv der Mathematik}, 103\penalty0 (4):\penalty0 301--311,
  2014{\natexlab{b}}.

\bibitem[Brush et~al.(2016)Brush, Dietz, Johnson-Tesch, and Power]{Bru_16}
Erin Brush, Jill Dietz, Kendra Johnson-Tesch, and Brianne Power.
\newblock On the {Chermak--Delgado} lattices of split metacyclic $p$-groups.
\newblock \emph{Involve}, 9\penalty0 (5):\penalty0 765--782, 2016.

\bibitem[Chermak and Delgado(1989)]{Che_89}
Andrew Chermak and Alberto Delgado.
\newblock A measuring argument for finite groups.
\newblock \emph{Proceedings of the American Mathematical Society}, 107\penalty0
  (4):\penalty0 907--914, 1989.

\bibitem[Cocke(2020)]{Coc_20}
William Cocke.
\newblock Subnormality and the {Chermak--Delgado} lattice.
\newblock \emph{Journal of Algebra and its Applications}, 19\penalty0
  (8):\penalty0 2050141, 2020.

\bibitem[Fasolă and Tărnăuceanu(2022)]{Fas_22}
Georgiana Fasolă and Marius Tărnăuceanu.
\newblock Finite groups with large {Chermak--Delgado} lattices.
\newblock \emph{Bulletin of the Australian Mathematical Society}, pages 1--5,
  2022.
\newblock \doi{10.1017/S0004972722000806}.
\newblock URL \url{http://dx.doi.org/10.1017/S0004972722000806}.

\bibitem[Glauberman(2006)]{Gla_06}
George Glauberman.
\newblock Centrally large subgroups of finite $p$-groups.
\newblock \emph{Journal of Algebra}, 300:\penalty0 480--508, 2006.

\bibitem[Isaacs(2008)]{FGT}
I.~Martin Isaacs.
\newblock \emph{Finite Group Theory}, volume~92.
\newblock American Mathematical Soc., 2008.

\bibitem[McCulloch(2018)]{Mcc_18}
Ryan McCulloch.
\newblock Finite groups with a trivial {Chermak--Delgado} subgroup.
\newblock \emph{Journal of Group Theory}, 21\penalty0 (3):\penalty0 449--461,
  2018.

\bibitem[Morresi~Zuccari et~al.(2018)Morresi~Zuccari, Russo, and
  Scoppola]{Zuc_18}
Alessandro Morresi~Zuccari, Valentina Russo, and Carlo~Maria Scoppola.
\newblock The {Chermak--Delgado} measure in finite $p$-groups.
\newblock \emph{Journal of Algebra}, 502:\penalty0 262--276, 2018.

\bibitem[Tărnăuceanu(2018)]{Tar_18}
Marius Tărnăuceanu.
\newblock A note on the {Chermak--Delgado} lattice of a finite group.
\newblock \emph{Communications in Algebra}, 46\penalty0 (1):\penalty0 201--204,
  2018.

\bibitem[Wilcox(2016)]{Wil_16}
Elizabeth Wilcox.
\newblock Exploring the {Chermak--Delgado} lattice.
\newblock \emph{Mathematics Magazine}, 89\penalty0 (1):\penalty0 38--44, 2016.

\end{thebibliography}

\end{document}